\numberwithin{equation}{section}
\def\cb{{\mathcal B}}
\def\cf{{\mathcal F}}
\def\ch{{\mathcal H}}
\def\co{{\mathcal O}}
\def\cs{{\mathcal S}}
\def\ga{{\mathfrak A}} 
\def\gb{{\mathfrak B}}
\def\gg{{\mathfrak G}}
\def\gam{{\mathfrak M}}
\def\gar{{\mathfrak R}}
\def\bc{{\mathbb C}}
\def\bl{{\mathbb L}}
\def\bm{{\mathbb M}}
\def\bn{{\mathbb N}}
\def\bp{{\mathbb P}}
\def\br{{\mathbb R}}
\def\bz{{\mathbb Z}}
\def\a{\alpha}
\def\g{\gamma}  \def\G{\Gamma}
\def\d{\delta}
\def\l{\lambda} \def\L{\Lambda}
\def\p{\pi}
\def\s{\sigma} 
\def\t{\tau}
\def\f{\varphi}  \def\F{\Phi}
\def\om{\omega} 
\def\z{\zeta}
\newtheorem{thm}{Theorem}[section]
\newtheorem{lem}[thm]{Lemma}
\newtheorem{prop}[thm]{Proposition}
\theoremstyle{definition}
\newtheorem{rem}[thm]{Remark}
\newtheorem{defin}[thm]{Definition}
\def\min{\mathop{\rm min}}
\def\idd{{1}\!\!{\rm I}}
\begin{document}
\title[Tail algebras]
{Tail algebras for monotone and $q$-deformed exchangeable stochastic processes}
\author{Vitonofrio Crismale}
\address{Vitonofrio Crismale\\
Dipartimento di Matematica\\
Universit\`{a} degli studi di Bari\\
Via E. Orabona, 4, 70125 Bari, Italy}
\email{\texttt{vitonofrio.crismale@uniba.it}}

\author{Stefano Rossi}
\address{Stefano Rossi\\
Dipartimento di Matematica\\
Universit\`{a} degli studi di Bari\\
Via E. Orabona, 4, 70125 Bari, Italy}
\email{\texttt{stefano.rossi@uniba.it}}

\begin{abstract}

We compute the tail algebras of exchangeable monotone stochastic processes. This allows us to
prove the analogue of de Finetti's theorem for this type of processes. In addition,
since the vacuum state on the 
$q$-deformed $C^*$-algebra is the only exchangeable state when $|q|<1$, we draw our attention to its
tail algebra, which turns out to obey a zero-one law. 

\vskip0.1cm\noindent \\
{\bf Mathematics Subject Classification}: 46L53, 60F20, 60G09, 60G10.\\
{\bf Key words}: tail algebras, de Finetti's theorem, monotone processes, $q$-deformed commutation relations
\end{abstract}

\maketitle
\section{Introduction}

In Classical Probability de Finetti's theorem characterizes exchangeability for a sequence of
random variables in terms of conditional independence and identical distribution
with respect to the tail algebra of the sequence itself, see {\it e.g.} \cite{Ka}.
Furthermore, exchangeability is equivalent to spreadability by virtue of a well-known
result due to Ryll-Nardzewski, \cite{R}.
Now it is relatively easy to guess what
the generalized statements of this couple of results should be within
the various non-commutative formalisms offered by Quantum Probability.
Yet the methods employed to prove them can often be far from
straightforward. Interestingly, surprises can occur when
the classical results cease to hold in suitably chosen models.
For instance, on the CAR algebra there exist spreadable states
that are not exchangeable, \cite{CRbis}.
Furthermore, in the $W^*$-formalism adopted by K\"{o}stler in
\cite{K}, examples have been  found of spreadable quantum processes which
nevertheless fail to be exchangeable. Sticking to this formalism, though, 
the sought generalizations can still be obtained  as long as exchangeability and spreadability are
replaced by their quantum analogues, see \cite{KS} and \cite{Cu} respectively, where
the symmetries are implemented by natural coactions of quantum groups.
It is also possible to arrive at quantum generalizations of de Finetti's theorem
in which the involved symmetries continue to be implemented by classical groups. However, this will require restricting the class of
quantum processes or variables addressed.
For example, by following this approach a de Finetti-type theorem can be proved
for Boolean \cite{Fbo}
and Fermi processes  \cite{CFCMP, CrFid}:  all processes of both types are exchangeable if and only if they are independent and identically distributed with respect to the tail algebra.
Similar results can actually be obtained in the framework
of quasi-local algebras  \cite{CRZ}, although  the analogy with the classical setting is looser, inasmuch as
the states of these $C^*$-algebras do not necessarily correspond to a stochastic process;
for the correspondence bewteen states and stochastic processes see \cite{CrFid, CrFid2}.
Even so, for infinite $\bz_2$-graded tensor products, which are of course examples of quasi-local
algebras, the analogy is complete, see \cite{CRZ, Fid}.

Pursuing this line of research, the present paper mainly but not only deals with monotone processes
in the $C^*$-algebraic formalism.
In Section \ref{prel} we first establish some notation from $C^*$-algebra theory and collect the necessary definitions from  the theory of quantum stochastic processes.
The bulk of the work is contained in Section \ref{monotone}, at the begin of which
we provide a rather quick yet self-contained account of the monotone algebra $\gam$
as the concrete unital $C^*$-algebra generated by annihilation and creation operators on the monotone
Fock space. After recalling how states of $\gam$ correspond to monotone processes, we move on to
complete the study of symmetric states initiated in \cite{CFG}.
As shown there, although the group of finite permutations on a countable set 
does not act on $\gam$ through automorphisms, exchangeability
of a monotone  process can still be seen as an invariance property
of the corresponding state, albeit with respect  to maps that are only unital, linear
and $*$-preserving.
In the aforementioned paper the set of these states is seen to coincide with  the set of spreading-invariant states, being
both equal to the segment whose endpoints are the vacuum state and the state at infinity.\\
The tail algebras of monotone symmetric states are described in Proposition \ref{tailalgebra}.
These are never trivial, apart from the tail algebra of the state at infinity. 
More precisely, the tail
algebra of the vacuum state is isomorphic with $\bc\oplus\bc$, where the two measure atoms
account for the projection onto the vacuum vector and its orthogonal projection. 
However, monotone tail algebras of symmetric states are always commutative and finite-dimensional. Moreover, they are strictly
contained in the stationary algebra, as observed in Remark \ref{stationary}.
Unlike the Boolean case analyzed in \cite{Fbo}, monotone
exchangeable algebras may even not be defined since it is not known whether the linear maps
alluded to above, which are only defined on a dense $*$-algebra of $\gam$, extend to the whole
$C^*$-algebra. Nevertheless, in Proposition \ref{rotalgebra} we show that the self-adjoint
subspace of the elements in the dense $*$-algebra fixed  by all these maps reduces to the scalars.
A version of de Finetti's theorem for monotone processes does hold. Indeed, in 
Theorem \ref{definetti} we prove that a monotone process is
exchangeable if and only if it is conditionally independent and identically
distributed with respect to its tail algebra.\\
In Section \ref{qdeformed} we  focus on processes on the so-called $q$-deformed algebra, with
$|q|< 1$. Unlike the monotone case, the corresponding dynamical
systems are now uniquely ergodic,  \cite{DyF}. However, what we believe is worth pointing out is that
enjoying good ergodic properties still gives no guarantees at the level of the stochastic process itself, insofar as
the Hewitt-Savage \cite{HS} and the Olshen \cite{Ols} theorem 
fail for $q$-deformed processes. More precisely, we show that the tail algebra of the vacuum state is trivial, Proposition \ref{trivialq}. Moreover, the tail, exchangeable, and stationary algebras of the vacuum state are
seen to be different from one another in Remark \ref{remq}. \\
Finally, for the reader's convenience we defer to the appendix a possibly known result
which shows how to recover classical (countable) stochastic processes in the 
$C^*$-algebra framework.

\section{Quantum stochastic processes and invariant states }\label{prel}

We collect here some basic definitions from  $C^*$-algebra theory as well as establishing
some notation which will be used consistently throughout the paper.\\ 
Given a collection of unital $C^*$-algebras $\{\ga_j\}_{j\in J}$ indexed by a set $J$, their unital free product $C^*$-algebra $\ast_{j\in J} \ga_j$ ({\it cf.} \cite{A}) is the unique (up to isomorphism) unital $C^*$-algebra determined by the following universal property: there are unital monomorphisms $i_j:\ga_j\rightarrow \ast_{j\in J} \ga_j$ such that for any unital $C^*$-algebra $\gb$ and unital morphisms $\F_j:\ga_j\rightarrow \gb$, there exists a unique unital homomorphism $\F:\ast_{j\in J} \ga_j\rightarrow \gb$ such that $\Phi\circ i_j=\Phi_j$ for every $j\in J$.
When $\ga_j=\ga$ for every $j\in J$, we denote the corresponding free product simply as $\ast_{J} \ga$. Details are to be found  in  \cite{CrFid, CrFid2, VDN}.\\

We now recall the definition of a (realization of a)  {\it quantum stochastic process} labelled by the index set $J$.
This is a quadruple
$\big(\ga,\ch,\{\iota_j\}_{j\in J},\xi\big)$, where $\ga$ is a (unital) $C^{*}$-algebra, referred to as the sample
algebra of the process, $\ch$ is a Hilbert space,
whose inner product is denoted by $\langle\cdot,\cdot \rangle$,
the maps $\iota_j$ are (unital) $*$-homomorphisms from $\ga$ to $\cb(\ch)$, and
$\xi\in\ch$ is a unit vector, which is cyclic for  the von Neumann algebra
$\bigvee_{j\in J}\iota_j(\ga)$.\\
A quadruple as above can equivalently be assigned through a state $\varphi$ on
the free product $C^*$-algebra $\ast_{J} \ga$. Indeed, if one starts 
with a stochastic process, then a state
$\varphi$ on the free product $\ast_{J} \ga$ can be defined by setting
$\varphi(i_j(a)):=\langle\iota_j(a)\xi, \xi\rangle$, $a\in\ga$ and $j\in J$. Rather interestingly, all states on the free product $\ast_{J} \ga$ arise in this way,
see  \cite[Theorem 3.4]{CrFid}.
Phrased differently, starting now with a state $\f\in\cs\big(\ast_{J}\ga\big)$, it is possible
to recover a stochastic process by looking at the GNS representation $(\p_\f, \ch_\f, \xi_\f)$ of
$\f$. Indeed, for every $j\in J$ we can set $\iota_j(a):=\pi_\f(i_j(a))$, $a\in\ga$, so as to get quadruple
$\big(\ga,\ch_\f,\{\iota_j\}_{j\in J},\xi_\f\big)$. 

The above definition of a quantum stochastic process is general enough to include
all classical cases as realised in the Kolmogorov consistency theorem. Indeed, suppose one is given a  family of finite-dimensional 
 distributions $\mu_{j_1, j_2, \ldots, j_k}\in P(\br^k)$, for any $k\in\bn$ and indices
$j_1, j_2, ..., j_k\in J$.
Consider the Tychonoff product $\br^J$, whose points are denoted by
$x=(x_j)_{j\in J}$. For any $k\in\bn$, $j_1, j_2, \ldots, j_k\in J$, and $A_1, A_2, \ldots, A_k$ Borel subsets of $\br$, we define
a corresponding cylinder of $\br^J$ as
$$C_{j_1, j_2, \ldots, j_k}^{A_1, A_2, \ldots, A_k}:=\big\{x\in \br^J \mid x_{j_1}\in A_1, x_{j_2}\in A_2, \ldots, x_{j_k}\in A_k\big\}.$$
As is known, if the finite-dimensional distributions satisfy the so-called consistency conditions of
Kolmogorov's theorem (see {\it e.g.} \cite{Bi}), then a measure $\mu$ can be defined on the $\sigma$-algebra
$\mathfrak{C}$
generated by cylinders by 
\begin{equation*}\label{cylmeasure}
\mu\big(C_{j_1, j_2, \ldots, j_k}^{A_1, A_2, \ldots, A_k}\big)= \mu_{j_1, j_2, \ldots, j_k}
(A_1\times A_2\times\ldots\times A_k).
\end{equation*}
Denote by $X_j$ the $j$-th coordinate function on $\br^J$, namely
$X_j(x)=x_j$, $x\in \br^J$.
The functions $\{X_j\}_{j\in J}$ provide a stochastic process on the probability space
$(\br^J, \mathfrak{C}, \mu)$ having $\{\mu_{j_1, j_2, \ldots, j_k}\}$ as finite-dimensional distributions.\\
The above construction can also be reinterpreted in terms of representations of a given sample algebra.
To this end, take $\ga=C_0(\br)$.
With $\ch:=L^2(\br^J, \mu)$, we consider a  family of $*$-homomorphisms of $\ga$ 
to $\mathcal{B}(\ch)$ defined as
$$
\iota_j(f):= f(X_j), \,  \quad f\in C_0(\br)\,,
$$
where $f(X_j)$ is the bounded operator acting on $L^2(\br^J, \mu)$  by multiplication
by the function $f(X_j)$.\\
Denoting by $1\in L^2(\br^J, \mu)$ the function  equal to 
$1$ $\mu$-a.e., we obviously have that $(C_0(\br), L^2(\br^J, \mu), \{\iota_j\}_{j\in J}, 1)$ is a stochastic process and
corresponds to the realization of the process provided by  Kolmogorov's theorem
through the coordinate functions $X_j$.\\
Conversely, a process is classical exactly when both the sample algebra
$\ga$ and $\bigvee_{j\in J}\iota_j(\ga)$ are commutative, see {\it e.g.} Appendix.\\ 

By a $C^*$-dynamical system we mean a triplet $(\ga, M,\Gamma)$, where $\ga$ is a unital $C^*$-algebra, $M$ is a monoid (unital semigroup), and $\G$ is a
representation $M \ni g\mapsto \G_g\in{\rm End}(\ga)$  by unital $*$-endomorphisms of $\ga$ such that
$\G_{gh}=\G_g \circ\G_h$, for any $g,h \in M$.
When $M=G$ is a group, in the corresponding $C^*$-dynamical system $(\ga, G, \a)$,  each $\a_g$ is a $*$-automorphisms of
$\ga$. In this case, one typically speaks of reversible dynamics, whereas dissipative dynamics corresponds to having non-invertible maps, see {\it e.g.} \cite{BR1}.
By $\cs(\ga)$ we denote the convex set of all states (normalized, positive, linear functionals) on $\ga$. Since the latter is assumed unital, $\cs(\ga)$  is weakly $*$-compact.
For any given $(\ga, M,\Gamma)$, we can define the convex subset $\cs_M(\ga)\subset\cs(\ga)$ of those states of
$\ga$ which are invariant under the action $\Gamma$ of $M$ as

$$
\cs_M(\ga):=\big\{\f\in\cs(\ga)\mid\f\circ\G_g=\f\,,\,\,g\in M\big\}\,.
$$
This is easily seen to be compact in the weak-$*$ topology.\\
When $M=G$ is a group, for any $\varphi\in
\cs_G(\ga)$ the corresponding fixed von Neumann algebra
$\pi_\varphi(\ga)''_G$ is defined as $\{T\in\cb(\ch_\varphi): U_g T U_g^*=T\,\, \textrm{for all}\, g\in G\}$, where
for $g\in G$ the operator $U_g$ is the unitary implementator  of $\a_g$, that is 
$U_g\pi_\f(a)U_g^*=\pi_\f(\a_g(a))$, $a\in \ga$.\\

Comparing the distributional symmetries involved in the present paper requires considering some algebraic structures on $\bz$.\\ 
We first consider the group generated by the one-step shift $\t(i):=i+1$ of the integers $\bz$, which is canonically identified with $\bz$ itself.\\
Let us now denote by $\bp_n$
the symmetric
group of order $n\in\bn$.
For any $n\geq 2$ we identify $\bp_{n-1}$ with the subgroup in $\bp_n$ fixing the $n$th element.
Thus, one can take the direct limit of the groups above, which may be easily identified with the group of all permutations on the integers leaving fixed all  but finitely many elements, denoted by $\bp_\bz$.\\
In addition, we consider the monoid $\bl_\bz$ of all strictly increasing maps $g:\bz\to\bz$.\\ 
By universality, the action on $\bz$ of the above algebraic structures can be lifted to the free $C^*$-algebra of the infinite free product $\ast_{\bz} \ga$ of a given sample algebra $\ga$.
We denote by $(\ast_{\bz} \ga,\t,\bz)$, $(\ast_{\bz} \ga,\a,\bp_\bz)$ and
$(\ast_{\bz} \ga,\lambda,\bl_\bz)$
the corresponding dynamical systems.  Note that $(\ast_{\bz} \ga,\lambda,\bl_\bz)$
is a dissipative dynamical system. However,  for every $h\in\bl_\bz$ the endomorphism $\lambda_h$ is injective since 
$\bl_\bz$ has left inverses. 
One clearly has $\cs_{\bp_\bz}(\ast_{\bz}\ga)\subseteq \cs_{\bl_\bz}(\ast_{\bz}\ga)\subseteq\cs_{\bz}(\ast_{\bz}\ga)$.\\
%

A stochastic process  $\big(\ga,\ch,\{\iota_j\}_{j\in \bz},\xi\big)$ is called
\begin{itemize}
\item[{\bf-}]  {\it spreadable} if the corresponding state  is invariant under the action on $\ast_{\bz} \ga$
of the monoid $\bl_\bz$, and the state itself is  said to be spreadable;
\item[{\bf-}] {\it exchangeable} if the corresponding state  is invariant under the action on $\ast_{\bz} \ga$
of the group $\bp_\bz$, and the state itself is  said to be symmetric;
\item[{\bf-}] {\it stationary} (or shift-invariant) if the corresponding state  is invariant under the action on $\ast_{\bz} \ga$
of the group $\bz$, and the state itself is  said to be stationary.
\end{itemize}

In some classes of processes, such as Boolean \cite{Fbo} and monotone, 
the representations $\iota_j$ are subject to additional constraints of various types.
These can be embodied by passing from the
free product $\ast_{\bz} \ga$ to its corresponding quotient
$C^*$-algebra. In this way, a process may directly
be described in terms of a state of the quotient algebra.\\

Going back to the general framework, we would like to point out that
automorphisms of the free product algebra provide quite a natural way to obtain
a new  process out of a given stochastic process. Indeed,
let $(\ga, \{\iota_j\}_{j\in J}, \ch, \xi)$ be a given stochastic process and let $\varphi$
be the corresponding state of $\ast_J \ga$.
If $\Phi$ is a $*$-automorphism $\ast_J \ga$, we can consider a new process
$(\ga, \{\iota_j^\Phi\}_{j\in J}, \ch, \xi)$ by defining $\iota_j^\Phi$ 
on the same sample algebra as
\begin{equation}\label{funproc}
\iota_j^\Phi:=\pi_\varphi\circ\Phi\circ i_j, \,j \in J
\end{equation}
In particular, one can choose an automorphism
$\Phi$ that factors as $\ast_J \Psi$, where $\Psi$ is a
fixed $*$-automorphism of the sample algebra $\ga$.
In this case, for any $j\in\bz$ one has 
$i_j\circ\Psi=\F\circ i_j$, and the transformed process in \eqref{funproc} is simply given by
\begin{equation}\label{funprocbis}
\iota_j^\Phi=\iota_j\circ\Psi, \, j\in J.
\end{equation}

Classically, this corresponds to passing from a process
$\{X_j\}_{j\in J}$ to $\{F(X_j)\}_{j\in J}$, where
$F$ is a homeomorphism of the spectrum of the sample algebra, {\it i.e.}
$\Psi(f)=f\circ F$, $f\in C(\sigma(\ga))$.\\

\begin{rem}
A stochastic process $\big(\ga,\ch,\{\iota_j\}_{j\in \bz},\xi\big)$ is stationary (exchangeable, spreadable)
if and only if $\big(\ga,\ch,\{\iota_j^\Phi\}_{j\in \bz},\xi\big)$ is so, where $\Phi$ is an automorphism
of $\ast_\bz\ga$ of the form $\ast_\bz \Psi$, for a fixed automorphims $\Psi$ of $\ga$,
as easily seen from \eqref{funprocbis}.
\end{rem}
If $\Phi$ does not factor as $\ast_\bz\Psi$, one may well start with
an exchangeable process and end up with a 
process  which is no longer exchangeable.
This is already seen with two classical variables $X_1, X_2$.
For instance, if $X_1, X_2$ are Bernoulli distributed with
parameter $0<p<1$ different from $\frac{1}{2}$, then $1-X_1$ and $X_2$  no longer have the same distribution, for
$1-X_1$ is a Bernoulli variable with parameter $1-p$. 
Note that the transformation of the two variables 
$X_1, X_2$ is  induced by the homeomorphism
$F(x_1, x_2)=(1-x_1, x_2)$.\\

What is more, when the sample $C^*$-algebra $\ga$ is singly generated, the distributional symmetries 
dealt with in the present work
can be read directly in terms of the variables $\{\iota_j(a_0)\}_{j\in\bz}$, where $a_0$ is any
generator of $\ga$.
For instance,
the process $\{\iota_j \}_{j\in\bz}$ will be stationary (spreadable, exchangeable) if and only if the corresponding
variables $\{\iota_j(a_0^\natural)\}_{ j\in\bz}$ are so, where $\natural$ is either $1$ or $*$.
To take but one example, let us see how exchangeability can be dealt with. 
Clearly, we only need to prove that exchangeability
of the variables carries over to the whole process regardless of the choice of the generator
$a_0$. Exchangeability
of the variables amounts to
$$
\langle \iota_{j_1}(a_0^\natural)\cdots\iota_{j_k}(a_0^\natural) \xi, \xi\rangle=\langle \iota_{\s(j_1)}(a_0^\natural)\cdots\iota_{\s(j_k)}(a_0^\natural) \xi, \xi\rangle
$$
for all $k\in\bn$, $j_1, j_2, \ldots, j_k\in\bz$, and for all finite permutations $\s$.
 Since repetitions of the indices are allowed, the above equality also holds as 
$$
\langle \iota_{j_1}(m_1)\cdots\iota_{j_k}(m_k) \xi, \xi\rangle=\langle \iota_{\s(j_1)}(m_1)\cdots\iota_{\s(j_k)}(m_k) \xi, \xi\rangle
$$
where $m_h$, $h=1, 2, \ldots, k$, are (possibly noncommutative) monomials in $a_0$ and $a_0^*$. 
Then by linearity and density the conclusion
is reached.\\

\begin{rem}
As a direct application of the considerations above, we see that the variables $\{\iota_j(a_0^\natural)\}_{ j\in\bz}$ are
exchangeable (spreadable, stationary) if and only $\{\iota_j(\Psi(a_0^\natural))\}_{ j\in\bz}$ are exchangeable (spreadable, stationary), where
$\Psi$ is any $*$-automorphism of $\ga$.
\end{rem}

\section{The case of monotone processes}\label{monotone}

For $k\geq 1$, set $I_k:=\{(i_1,i_2,\ldots,i_k) \mid i_1< i_2 < \cdots <i_k, i_j\in \mathbb{Z}\}$. The discrete monotone Fock space is the Hilbert space $\cf_m:=\bigoplus_{k=0}^{\infty} \ch_k$, where for every $k\geq 1$, $\ch_k:=\ell^2(I_k)$, and $\ch_0=\mathbb{C}\zeta$, $\zeta$ being the Fock vacuum. Borrowing the terminology from physicists' parlance, we call each
$\ch_k$ the $k$-particle space and denote by $\cf^0_m$ the total set of finite particle vectors in $\cf_m$, that is
$$
\cf^0_m:=\bigg\{\sum_{n=0}^{\infty} c_n\xi_n: \xi_n\in \ch_n,\,\, c_n\in \mathbb{C}\,\,\textrm{s.t.}\,\, c_n=0\,\,\, \text{for all $n$ but a finite set}  \bigg\} \,.
$$
The canonical basis of the discrete monotone Fock space is obtained in the following way. If $(i_1,i_2,\ldots,i_k)\in I_k$ is an increasing sequence  of integers, we denote by $e_{(i_1,i_2,\ldots,i_k)}\in \ell^2(I_k)$ the square summable sequence that is always zero but at $(i_1, i_2, \ldots, i_k)$, where
it is $1$. The vector corresponding to the empty set
is just the vacuum $\zeta$. More often than not, however, we will simply write $e_i$ instead of $e_{(i)}$  to ease our notation.
For every $i\in \mathbb{Z}$, the monotone creation and annihilation operators are respectively given  by $a^\dag_i\zeta=e_i$, $a_i\zeta=0$ and
\begin{equation*}
a^\dagger_i e_{(i_1,i_2,\ldots,i_k)}:=\left\{
\begin{array}{ll}
e_{(i,i_1,i_2,\ldots,i_k)} & \text{if}\,\, i< i_1 \\
0 & \text{otherwise}, \\
\end{array}
\right.
\end{equation*}
\begin{equation*}
a_ie_{(i_1,i_2,\ldots,i_k)}:=\left\{
\begin{array}{ll}
e_{(i_2,\ldots,i_k)} & \text{if}\,\, k\geq 1\,\,\,\,\,\, \text{and}\,\,\,\,\,\, i=i_1\\
0 & \text{otherwise}. \\
\end{array}
\right.
\end{equation*}
Both $a^\dagger_i$ and $a_i$  can be shown to have unital norm, 
to be mutually adjoint, and to satisfy the following relations
\begin{equation}
\label{comrul}
\begin{array}{ll}
  a^\dagger_ia^\dagger_j=a_ja_i=0 & \text{if}\,\, i\geq j\,, \\
\end{array}
\end{equation}
In addition, for every $i$ the following identity holds
\begin{equation}
\label{comrul2}
a_ia^\dag_j=\delta_{i,j}\left(I-\sum_{k\leq i} a^\dag_ka_k\right)
\end{equation}
where the convergence is in the strong operator topology, and $I$ is the identity
of $\cb(\cf_m)$.
We denote by $\gam$  the concrete unital
$C^*$-subalgebra of $\cb(\cf_m)$ with unit $I$ generated by the set of all annihilators $\{a_i\mid i\in\mathbb{Z}\}$, whereas  $\gam_0$ will denote the unital $*$-algebra of $\gam$ generated by the same set. Of course, $\gam_0$ is norm dense
in $\gam$. For completeness' sake, we recall that the algebra generated by so-called position
operators $a_i+a^\dag_i$  coincides with the whole monotone algebra, see \cite[Proposition 5.13]{CFL}.\\

For the reader's convenience, we also recall the following definition from \cite{CFL}.

\begin{defin}
A word $X$ in $\gam_0$ is said to have a $\lambda$-\textbf{form} if there are $m,n\in\left\{  0,1,2,\ldots
\right\}$ and $i_1<i_2<\cdots < i_m, j_1>j_2> \cdots > j_n$ such
that
$$
X=a_{i_1}^{\dagger}\cdots a_{i_m}^{\dagger} a_{j_1}\cdots a_{j_n}\,,
$$
with $X=I$, that is the empty word if $m=n=0$. Its length is $l(X)=m+n$.

A word $X$ is said to have a $\pi$-\textbf{form} if there are $m,n\in\left\{0,1,2,\ldots
\right\}$, $k\in\mathbb{Z}$, $i_1<i_2<\cdots < i_m, j_1>j_2> \cdots > j_n$ such that $i_m<k>j_1$ and
$$
X=a_{i_1}^{\dagger}\cdots a_{i_m}^{\dagger} a_{k}a_{k}^{\dagger} a_{j_1}\cdots a_{j_n}\,.
$$
The length is now $l(X)=m+n+2$.
\end{defin}
Let $\L$ be the index set such that $\{X_\l\}_{\l\in\L}$ represents all $\l$-forms, that is
\begin{equation*}
\label{xlambda}
X_\l=a_{i_1^{(\l)}}^{\dagger}\cdots a_{i_{m(\l)}^{(\l)}}^{\dagger} a_{j_1^{(\l)}}\cdots a_{j_{n(\l)}^{(\l)}}\,,
\end{equation*}
for $i_1^{(\l)}<i_2^{(\l)}<\cdots < i_{m(\l)}^{(\l)}, j_1^{(\l)}>j_2^{(\l)}> \cdots > j_{n(\l)}^{(\l)}$, $m(\l),n(\l)\geq 0$. Since all $\l$-forms of the type $a^\dag_ia_i$ are in one-to-one correspondence with $\bz$, then $\bz\subset\L$  in a natural way. After this identification, we put $\G:=\L\setminus\bz$. In \cite[Theorem 3.4]{CFG}  the families $\{X_\l\}_{\l\in\G}$ and $\{a_ia^\dag_i\}_{i\in \mathbb{Z}}$ were proved to form a Hamel basis of $\gam_0$.\\
Following \cite{CFG}, we present the set $\G\cup \bz$ in a different way, denoting every vector of the Hamel basis of $\gam_0$ by $X_{(\l_1,\l_2)}$,  where $\l_1,\l_2\in2^\bz$ are finite ascending ordered sets of $\bz$, including the empty set for the identity. The words of length 1 correspond to $a^\dagger_i=X_{(\{i\},\emptyset)}$, and $a_i=X_{(\emptyset,\{i\})}$, and for the words of length 2 of type $X_{(\{i\},\{j\})}$ one finds
\begin{align*}
&i\neq j\,\,\text{corresponds to}\,\, X_ {(\l_1,\l_2)}=a^\dagger_ia_j\,,\\
&i= j\,\,\text{corresponds to}\,\, X_{(\l_1,\l_2)}=a_ia^\dagger_i\,.
\end{align*}
The remaining cases give rise to $\l$-forms of length at least $2$ of the type 
$$
X_{(\l_1,\l_2)}=a^\dagger_{i_1}\cdots a^\dagger_{i_m}a_{j_1}\cdots a_{j_n}\,.
$$
with
$(\l_1,\l_2)=(\{i_1,\cdots i_m\},\{j_n,\cdots j_1\})$.\\

We now move on to recall how to associate with any permutation $\s\in \bp_{\mathbb{Z}}$ a 
$*$-linear map $T_\s$ acting on the dense subalgebra $\gam_0$, as was first done in \cite{CFG}.
A finite permutation $\s\in \bp_{\mathbb{Z}}$ is  said to be order preserving on a finite ordered subset 
$I:=\{i_1,\ldots, i_m\}\subset\bz$, with $i_1<\cdots < i_m$, if $\s(i_h)<\s(i_{h+1})$ for each $h=1,\ldots, m-1$.
The set of order-preserving permutations on $I$ is denoted by $OP(I)$.\\
For  elements $X_{(\l_1,\l_2)}$ and $\s\in\bp_\bz$, we define
\begin{equation*}
T_\s\big(X_{(\l_1,\l_2)}\big):=\left\{
\begin{array}{ll}
             X_{(\s(\l_1),\s(\l_2))} & \text{if}\,\, \s\in OP(\l_1)\cap OP(\l_2)\,, \\
             0 & \text{otherwise}\,.
           \end{array}
           \right.
\end{equation*}
The maps $T_\s$ are seen to extend by linearity to $*$-maps acting on $\gam_0$. However,
they are not positive maps, as shown 
in \cite{CFG}. To date it is not actually known whether the $T_\s$'s are bounded maps and thus extend by density
to the whole monotone $C^*$-algebra $\gam$.\\
\begin{rem}\label{THamel}
Note that for every $\s\in\bp_\bz$ the map $T_\s$ sends a $\pi$-form of the Hamel basis to a 
$\pi$-form of the Hamel basis and a $\lambda$-form to either a $\lambda$-form
of the same length or to $0$, where the second case may occur only if the length 
of the $\lambda$-form is at least $2$.
\end{rem}

Set $\ga:=\bm_2(\bc)\oplus\bc$ and  $a:=
 \left(
\begin{array}{ll}
0 & 1 \\
0 & 0
\end{array}
\right)\in\bm_2(\bc)
$.
Note that $a$ is a generator of $\bm_2(\bc)$ that satisfies the relation
$aa^*+a^*a= I_{\bm_2(\bc)}$.
Equalities  \eqref{comrul}, \eqref{comrul2} and
Lemma 5.4 in \cite{CFL} easily imply that the monotone
$C^*$-algebra coincides, up to isomorphism, with the quotient of
$\ast_\bz\ga$ modulo the monotone relations. More explicitly, the isomorphism 
is given by $a_j\leftrightarrow [i_j(a)]$, $j\in\bz$, where
$[i_j(a)]$ denotes the equivalence class of $i_j(a)$ in  $\ast_\bz\ga$.
That said, we can recall  what the process  corresponding to a given state $\f\in \cs(\gam)$
looks like. This is obtained by defining $\iota_j$  as 
\begin{equation}\label{iota}
\iota_j(a\oplus\lambda)=\pi_\f\bigg(a_j+\lambda\sum_{k<j }a_k^\dag a_k\bigg), \quad \lambda\in\bc
\end{equation}
where $\pi_\f:\gam\rightarrow\cb(\ch_\f)$ is the GNS representation of $\gam$ associated to $\varphi$.\\
In Section 5 of \cite{CFG}  monotone processese have been shown to be exchangeable if and only if the corresponding
monotone states $\varphi\in\cs(\gam)$ satisfy
$$\f\lceil_{\gam_0}\circ T_\s=\f\lceil_{\gam_0}\quad\textrm{for every}\,\s\in\bp_\bz\,.$$
We will consistently refer to such states as symmetric states.
We also recall that $\bz$ acts on $\gam$ through the shift automorphism 
$\alpha_\tau$ given by $\alpha_\tau(a_i)=a_{i+1}$, $i\in\bz$. Note that 
$\alpha_\tau$ comes from the corresponding action of $\bz$ at the level of
the free product $\ast_\bz\ga$. A state $\varphi$ on $\gam$
is called stationary if $\f\circ\a_\tau=\f$.

Denote now $\gb_0:=\text{span} \{X\in \gam_0: l(X)>0\}$ and $\gb$ its norm closure. Corollary 5.10 of \cite{CFL} gives that $\gam$ is the $C^*$-algebra obtained by adding the identity to $\gb$. Then every $Y\in\gam$ decomposes into the sum $Y:=X+cI$, where $X\in\gb$ and $c\in\mathbb{C}$.\\
The state at infinity $\om_\infty$ (see {\it e.g.} \cite{BR1})
is consequently well defined as
$$
\om_\infty(Y)=\om_\infty(X+cI):=c\,.
$$
In addition, the vacuum state $\om\in \cs(\gam)$ is given by
$$
\om(Y):=\langle Y \zeta,\zeta\rangle\,.
$$
We recall  that  symmetric states are all of the form 
$$
\f=\g\om+(1-\g)\om_{\infty}\, ,
$$ 
 for some $\g\in[0,1]$, see \cite[Proposition 5.1]{CFG}.\\

The next proposition provides a description of the operator system
$\mathfrak{M}_0^{\rm{ex}}:=\{X\in\mathfrak{M}_0: T_\sigma(X)=X\,\,\textrm{for every}\, \sigma\in \bp_\bz\}$, which
turns out to be trivial.

\begin{prop}\label{rotalgebra}
With the notation set above, we have $\mathfrak{M}_0^{\rm{ex}}=\mathbb{C}I$.
\end{prop}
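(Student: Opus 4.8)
The plan is to work in the Hamel basis $\{X_{(\l_1,\l_2)}\}$ of $\gam_0$ and to exploit the fact, recorded in the definition of $T_\s$ and in Remark \ref{THamel}, that each $T_\s$ sends a basis word to a basis word of the same type or to $0$. The inclusion $\bc I\subseteq\mathfrak{M}_0^{\rm ex}$ is immediate, since $T_\s(I)=T_\s(X_{(\emptyset,\emptyset)})=X_{(\emptyset,\emptyset)}=I$ for every $\s$. For the reverse inclusion, I would fix $X\in\mathfrak{M}_0^{\rm ex}$ and expand it in the Hamel basis as a finite sum $X=\sum_{(\l_1,\l_2)}c_{(\l_1,\l_2)}X_{(\l_1,\l_2)}$. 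All the finitely many indices occurring in the words with $c_{(\l_1,\l_2)}\neq0$ lie in some window $[-N,N]\cap\bz$; the goal is then to show that every coefficient attached to a word of positive length vanishes, leaving only the identity term.

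The key step is to choose a single, well-chosen $\s\in\bp_\bz$ that translates the whole window far to the right. Setting $D:=2N+1$, I would produce a finite permutation $\s$ restricting to $k\mapsto k+D$ on $[-N,N]$ and equal to the identity outside a large finite interval; such a $\s$ exists because the increasing bijection $[-N,N]\to[-N+D,N+D]$ extends to a finite permutation of $\bz$ (both complements being infinite on each side, the partial bijection is completed on a finite block). Since $\s$ is strictly increasing on $[-N,N]$, it belongs to $OP(\l_1)\cap OP(\l_2)$ for every word appearing in $X$, so none of them is annihilated; moreover translations preserve the $\l$-form and $\pi$-form structure, whence $T_\s(X_{(\l_1,\l_2)})=X_{(\l_1+D,\,\l_2+D)}$ for each such word, the shifted pair again indexing a genuine Hamel basis vector.

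Imposing $T_\s(X)=X$ and cancelling the identity term yields $\sum_{(\l_1,\l_2)\neq(\emptyset,\emptyset)}c_{(\l_1,\l_2)}X_{(\l_1+D,\l_2+D)}=\sum_{(\l_1,\l_2)\neq(\emptyset,\emptyset)}c_{(\l_1,\l_2)}X_{(\l_1,\l_2)}$. On the left the words are supported in $[N+1,3N+1]$ and on the right in $[-N,N]$, two disjoint windows; hence the two families of basis vectors are pairwise distinct, and linear independence of the Hamel basis forces $c_{(\l_1,\l_2)}=0$ for every $(\l_1,\l_2)\neq(\emptyset,\emptyset)$. Thus $X=c_{(\emptyset,\emptyset)}I\in\bc I$, as desired. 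The only delicate points are bookkeeping ones: verifying that a pure translation of the window is realized by an honest finite permutation, and checking that the shifted pairs $(\l_1+D,\l_2+D)$ are valid and pairwise distinct basis indices so that linear independence applies — both following from the translation-invariance of the combinatorial conditions defining $\l$- and $\pi$-forms. I expect no deeper obstacle, since the conceptual crux is simply finding the translating $\s$: the disjoint-support trick converts the fixed-point equation into a one-line linear-independence statement.
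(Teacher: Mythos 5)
Your proposal is correct and follows essentially the same route as the paper: both choose a finite permutation that acts as an order-preserving translation carrying the (finite) set of indices supporting $X$ onto a disjoint set, observe that no word is annihilated and that $\lambda$- and $\pi$-forms are sent to $\lambda$- and $\pi$-forms, and conclude by linear independence of the Hamel basis. Your explicit cancellation of the identity term is a slightly more careful bookkeeping of a point the paper leaves implicit, but the argument is the same.
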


\begin{proof}
We will actually prove more: for any $X\in\mathfrak{M}_0\setminus\mathbb{C}I$, there is a permutation
$\sigma\in \mathbb{P}_\mathbb{Z}$ such that $T_\s(X)\neq X$.
Now any such $X$ can be written as $X=\sum_{k\in F} \alpha_k X_{(\lambda_1^{(k)}, \lambda_2^{(k)})}$, where $F$ is a finite set,
and $\alpha_k$ are complex numbers.
Let $G\subset\mathbb{Z}$ be  the finite set obtained as the union $\bigcup_{k\in F} (\lambda_1^{(k)}\cup\lambda_2^{(k)})$.
It is enough to pick a permutation $\sigma\in\mathbb{P}_\mathbb{Z}$ such that $\sigma(G)\cap G=\emptyset$ and $\sigma\in OP(G)$ to have
$T_\s(X)\neq X$. Indeed, by construction for any $k\in F$ we have $T_\s(X_{(\lambda_1^{(k)}, \lambda_2^{(k)})})\neq X_{(\lambda_1^{(j)}, \lambda_2^{(j)})}$ with
$j\in F$, and the conclusion follows by linear independence as $T_\s$ sends $\lambda$-forms ($\pi$-forms) to
$\lambda$-forms ($\pi$-forms). All is left to do is exhibit such a permutation.
Denote by $m_G$ and $M_G$  the minimum and the maximum of $G$, respectively. If we define
$$
\sigma(h):=\left\{
\begin{array}{ll}
\tau^{M_g-m_g+1}(h) & h\in G \\
\tau^{-(M_g-m_g+1)}(h) & h\in \tau^{M_g-m_g+1}(G) \\
h & \textrm{otherwise}
\end{array}
\right.
$$
$\sigma$ is seen at once to enjoy the desired properties.
\end{proof}

We recall that
given a stochastic process $\big(\ga,\ch,\{\iota_j\}_{j\in J},\xi\big)$, its \emph{tail algebra} is the von Neumann subalgebra of
$\mathcal{B}(\ch)$ defined as
$$
\xi_{\f}^{\perp}:=\bigcap_{I\subset J,\,\, I\,\,\text{finite}}\bigg(\bigcup_{K\cap I=\emptyset,\,\, K\,\,\text{finite}}\bigg(\bigvee_{k\in K}\iota_k(\ga)\bigg)\bigg)''\,,
$$
where $\varphi$ is the corresponding state on $\ast_\bz\ga$.\\
It is one of our goals in the present work to come to a full description of the tail algebra of exchangeable processes. 
To this aim, we need to recall a simple general result from representation theory of $C^*$-algebras.
Although the result is likely to be well known, we include a precise statement and a sketched proof for want of a reference.\\
Let $\mathfrak{A}$ be a unital $C^*$-algebras and let $\om_1, \om_2$ be two states on $\mathfrak{A}$. If
 $\om_1$ and $\om_2$ are disjoint, namely the corresponding GNS representations are disjoint\footnote{Two representations $\pi_1$ and $\pi_2$ of $\mathfrak{A}$ are disjoint if
$(\pi_1, \pi_2)=\{0\}$, where $(\pi_1, \pi_2):=\{T\in\mathcal{B}(\ch_1, \ch_2): T\pi_1(a)=\pi_2(a)T,\,\,a\in\mathfrak{A}\}$.}, then the GNS representation of any non-trivial convex combination $\om=\gamma\om_1+(1-\gamma)\om_2$, with $0< \gamma< 1$, is obtained as a direct sum. More precisely, the GNS triple $(\mathcal{H}_\omega, \pi_\omega, \xi_\omega)$ can be
identified with
\begin{equation*}
\label{direct1}
\mathcal{H_\omega}=\mathcal{H}_{\om_1}\oplus\mathcal{H}_{\om_2}, \,\,\pi_\om=\pi_{\om_1}\oplus\pi_{\om_2}, \,\,
\xi_\om=\gamma^{\frac{1}{2}}\xi_{\om_1}\oplus(1-\gamma)^{\frac{1}{2}}\xi_{\om_2}
\end{equation*}
up to unitary equivalence. This is seen as follows.
To begin with, the equalities $\|\xi_\om\|^2=1$ and $\omega(a)=\langle\pi_\omega(a)\xi_\omega, \xi_\omega\rangle$, for any $a\in\mathfrak{A}$, can be checked by straightforward computation.
To conclude, we need only make sure that the vector $\xi_\om$ is cyclic for $\pi_\om$. This is where the disjointness of
$\om_1$ and $\om_2$ plays a role. Indeed, in this case it is well known (see \emph{e.g.} Theorem 10.3.5 in \cite{Kad}) that the von Neumann algebra generated by $\pi_\om$
decomposes into a direct sum, namely
\begin{equation*}
\label{direct2}
\pi_\om(\mathfrak{A})''=\pi_{\om_1}(\mathfrak{A})''\oplus\pi_{\om_2}(\mathfrak{A})''\,.
\end{equation*}
But then $\pi_\om(\mathfrak{A})''\xi_\om=\pi_{\om_1}(\mathfrak{A})''\xi_{\om_1}\oplus\pi_{\om_2}(\mathfrak{A})''\xi_{\om_2}$, which means $\xi_\om$ is cyclic for $\pi_\om(\mathfrak{A})''$, and so
it is cyclic for $\pi_\om(\mathfrak{A})$ as well, since the latter algebra is strongly dense in the former.\\

The general framework we outlined  above  applies in particular to all symmetric states because they are just a convex combination of the vacuum state
$\om$ and the state at infinity $\om_\infty$, which are both irreducible and inequivalent.
Therefore, if $\varphi=\gamma\om+(1-\gamma)\om_\infty$, with $0<\gamma<1$, then, up to unitary equivalence, we have
$$\mathcal{H}_\varphi=\cf_m\oplus\mathbb{C}, \,\, \pi_\varphi(X)=X\oplus\om_\infty(X)\,\,\textrm{for any}\, X\in\mathfrak{M}, \,\, \xi_\varphi=\gamma^\frac{1}{2}\zeta\oplus(1-\gamma)^\frac{1}{2}$$
 and $\pi_\varphi(\mathfrak{M})''=\pi_\om(\mathfrak{M})''\oplus\mathbb{C}=\cb(\cf_m)\oplus\bc$.\\
This allows us to thoroughly describe the tail algebra $\xi_\varphi^\perp$ when
$\varphi$ is a symmetric state. Denoting by $P_\zeta$ the orthogonal projection onto $\bc\zeta$ we have
the following

\begin{prop}\label{tailalgebra}
For any symmetric state $\varphi=\gamma\omega+(1-\gamma)\omega_\infty$, with $0\leq\gamma\leq 1$,
we have:

\begin{enumerate}
\item if $\gamma=0$, that is $\varphi=\omega_\infty$, then $\pi_{\omega_\infty}(\mathfrak{M})''=\mathbb{C}=\xi_{\omega_\infty}^\perp$;
\item if $\gamma=1$, that is $\varphi=\omega$, then $\pi_\omega(\mathfrak{M})''=\mathcal{B}(\cf_m)$ and
$\xi_\omega^\perp=\mathbb{C}P_\zeta\oplus\mathbb{C}P_\zeta^\perp$;
\item if $0<\gamma<1$,  then $\pi_\varphi(\mathfrak{M})''=\mathcal{B}(\cf_m)\oplus\mathbb{C}$ and $\xi_\varphi^\perp=(\mathbb{C}P_\zeta\oplus\mathbb{C}P_\zeta^\perp)\oplus\mathbb{C}$.
\end{enumerate}
\end{prop}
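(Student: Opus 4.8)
The statement breaks into an easy reduction and one genuine computation, with essentially all the weight resting on part (2). For part (1), $\om_\infty$ is the character of $\gam$ induced by the quotient $\gam/\gb\cong\bc$ (Corollary 5.10 of \cite{CFL} realizes $\gam$ as the unitization of the ideal $\gb$), so $\ch_{\om_\infty}$ is one-dimensional and every von Neumann subalgebra of $\cb(\ch_{\om_\infty})=\bc$ — the tail algebra included — is trivial. For part (3) I would invoke the direct-sum decomposition $\pi_\varphi=\pi_\om\oplus\pi_{\om_\infty}$ established just before the statement (valid because $\om$ and $\om_\infty$ are disjoint): by \eqref{iota} the generators split as $\iota_k^\varphi=\iota_k^\om\oplus\iota_k^{\om_\infty}$, the second summand being scalar. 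The central projection separating the two summands is recovered inside the generating subalgebras themselves, since $\iota_k^\varphi(0\oplus1)=\big(\sum_{j<k}a_j^\dag a_j\big)\oplus0$ converges strongly to $P_\zeta^\perp\oplus0$ as $k\to+\infty$, and $a_ka_k^\dag\oplus0$ then produces $P_\zeta\oplus0$; hence $\xi_\varphi^\perp=\xi_\om^\perp\oplus\xi_{\om_\infty}^\perp=(\bc P_\zeta\oplus\bc P_\zeta^\perp)\oplus\bc$. Everything therefore reduces to part (2).

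For (2) we have $\pi_\om=\mathrm{id}$ and $\ch_\om=\cf_m$. Writing $\cn_I:=\big(\bigcup_{K\cap I=\emptyset}\bigvee_{k\in K}\iota_k(\ga)\big)''$, the tail algebra is $\xi_\om^\perp=\bigcap_{I}\cn_I$, and by \eqref{iota} each $\cn_I$ is generated by $\{a_k,a_k^\dag:k\notin I\}$ together with the projections $\iota_k(0\oplus1)=\sum_{j<k}a_j^\dag a_j$ (onto the non-vacuum vectors of smallest index $<k$), $k\notin I$. The inclusion $\bc P_\zeta\oplus\bc P_\zeta^\perp\subseteq\xi_\om^\perp$ is the easy half: for any finite $I$ the projections $\iota_k(0\oplus1)$ with $k\notin I$ lie in $\cn_I$ and increase strongly to $P_\zeta^\perp$ as $k\to+\infty$, so $P_\zeta^\perp$, hence $P_\zeta$, belongs to every $\cn_I$.

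For the reverse inclusion the plan is to pass to commutants via $\xi_\om^\perp=\big(\bigvee_I\cn_I'\big)'$. Since $P_\zeta\in\cn_I$ for all $I$ one already has $\bigvee_I\cn_I'\subseteq\{P_\zeta\}'=\bc P_\zeta\oplus\cb\big((\bc\zeta)^\perp\big)$, and the whole point is to prove the opposite inclusion $\cb\big((\bc\zeta)^\perp\big)\subseteq\bigvee_I\cn_I'$; taking commutants then forces $\bigvee_I\cn_I'=\{P_\zeta\}'$ and hence $\xi_\om^\perp\subseteq\{P_\zeta\}''=\bc P_\zeta\oplus\bc P_\zeta^\perp$. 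Fixing two basis vectors $e_\alpha,e_\beta\neq\zeta$ with all indices in $(-n,n)$, and setting $I_n:=\{k\in\bz:|k|\le n\}$, I would form
\[
S^{(n)}\ :=\ \sum_{p}\,\big|e_{p\alpha}\big\rangle\big\langle e_{p\beta}\big|\,,
\]
the sum running over all strictly increasing finite tuples $p$ of integers $<-n$ (including the empty tuple), where $p\alpha$ denotes $p$ followed by $\alpha$. The summands have pairwise orthogonal domains and ranges, so $S^{(n)}$ is a contraction, and for a fixed vector all but the $p=\emptyset$ term eventually annihilate it, whence $S^{(n)}\to|e_\alpha\rangle\langle e_\beta|$ strongly as $n\to\infty$. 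The key claim is that $S^{(n)}\in\cn_{I_n}'$, i.e. that $S^{(n)}$ commutes with every $a_k,a_k^\dag$ and every $\sum_{j<k}a_j^\dag a_j$ with $|k|>n$: granting this, each $|e_\alpha\rangle\langle e_\beta|$ lies in the strongly closed algebra $\bigvee_I\cn_I'$, and letting $\alpha,\beta$ range over all non-vacuum basis vectors these rank-one operators generate all of $\cb\big((\bc\zeta)^\perp\big)$, completing the upper bound.

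The main obstacle is exactly this commutation check, and it is where the asymmetry of the monotone relations bites. The generators with $k>n$ annihilate all the vectors $e_{p\alpha},e_{p\beta}$, so commutation there is immediate; the real content is commutation with the far-negative creators $a_k^\dag$ ($k<-n$), which prepend $k$ and thereby shift the index $p$ by one slot. It is precisely to absorb this shift that $S^{(n)}$ must be the full prefix sum rather than a single rank-one operator: a direct bookkeeping on the basis shows $a_kS^{(n)}$ and $S^{(n)}a_k$ reindex to the same sum. Because a creator can prepend an arbitrarily small index to any vector, no finite set of coordinates can be localized away and the witnessing operators cannot be taken of finite rank; their boundedness and exact commutation rest on the self-similar structure of $\cf_m$ (the unitary identification of the vectors of smallest index $k$ with those of smallest index $>k$). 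This is also why $\cf_m$ does not factor as a tensor product across a splitting of $\bz$, so that the usual localization route to tail triviality is unavailable and the explicit construction of the $S^{(n)}$ seems unavoidable.
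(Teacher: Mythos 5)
Your proof is correct, and for the substantive case $\gamma=1$ it takes a genuinely different route from the paper's. The paper bounds $\xi_\om^\perp$ from above by a localization argument: it shows each $\mathcal{R}_n$ consists of operators that are block diagonal with respect to $\cf_m^{(n)}\oplus(\cf_m^{(n)})^\perp$, where $\cf_m^{(n)}$ is spanned by the tensors of smallest index $\leq n$, and then intersects these block-diagonal algebras. You instead compute $\bigvee_n\mathcal{R}_n'$ exactly, by exhibiting the explicit partial isometries $S^{(n)}=\sum_p|e_{p\alpha}\rangle\langle e_{p\beta}|$ and taking strong limits, and then pass to the commutant. I checked your key commutation claim: for $k>n$ both $a_kS^{(n)}$ and $S^{(n)}a_k$ vanish, for $k<-n$ both reduce to $\sum_q|e_{q\alpha}\rangle\langle e_{(k)q\beta}|$ with $q$ running over the increasing tuples with entries in $(k,-n)$, and the extra generators $\sum_{j<k}a_j^\dag a_j$ are projections whose ranges contain every $e_{p\alpha}$ and $e_{p\beta}$, so indeed $S^{(n)}\in\mathcal{R}_n'$. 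Your route costs more bookkeeping but buys more: it yields the precise identity $\bigvee_n\mathcal{R}_n'=\bc P_\zeta\oplus\cb\big((\bc\zeta)^\perp\big)$, and it sidesteps the invariance questions on which the paper's argument rests (as you note, a creator $a_j^\dag$ with $j<-n$ sends $\zeta\in(\cf_m^{(n)})^\perp$ into $\cf_m^{(n)}$, so the negative tail is exactly where the localization picture is delicate). In this respect your proof is the more robust of the two.

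Two smaller points. First, you do not address the assertions $\pi_\om(\gam)''=\cb(\cf_m)$ and $\pi_{\om_\infty}(\gam)''=\bc$, which are part of the statement; the paper gets the former from irreducibility of the Fock representation, \cite[Proposition 5.9]{CFL}, so this is a citation rather than a gap. Second, in part (3) the formula $\iota_k^\varphi(0\oplus1)=\big(\sum_{j<k}a_j^\dag a_j\big)\oplus0$ is wrong: since $\sum_{j<k}a_j^\dag a_j=I-a_{k-1}a_{k-1}^\dag$ and $\om_\infty$ vanishes on $a_{k-1}a_{k-1}^\dag\in\gb$, the second component is $1$, not $0$, and the strong limit is $P_\zeta^\perp\oplus1$. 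The central projection $I\oplus0$ is still reachable: $a_ka_k^\dag\oplus0\to P_\zeta\oplus0$ as you say, and the differences $\iota_k^\varphi(0\oplus1)-\iota_{k'}^\varphi(0\oplus1)=\big(\sum_{k'\leq j<k}a_j^\dag a_j\big)\oplus0$ converge to $P_\zeta^\perp\oplus0$ as $k\to+\infty$ and $k'\to-\infty$. So the reduction of part (3) to part (2) survives, but the step as written needs this one-line repair.
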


\begin{proof}
The case $\gamma=0$, \emph{i.e.} $\varphi=\omega_\infty$, is immediately dealt with as one has $\pi_{\omega_\infty}(\mathfrak{M})''=\pi_{\omega_\infty}(\mathfrak{M})=\mathbb{C}=\xi_{\omega_\infty}^\perp$.\\

If $\gamma=1$, \emph{i.e.} $\varphi=\omega$, the GNS representation $\pi_\varphi$ is nothing
but the Fock representation of the monotone annihilators, which is irreducible, see \cite[Proposition 5.9]{CFL}. Accordingly,
we have $\pi_\omega(\mathfrak{M})''=\mathcal{B}(\cf_m)$. Taking into account \eqref{iota},
the corresponding process is
given by:
\begin{equation}
\label{iota1}
\iota_j(a\oplus\beta)=a_j+\beta \sum_{k<j}a^\dag_ka_k, \quad \beta\in\bc.
\end{equation}

As $\{a_k\}_{k\in\bz}$ have orthogonal ranges, the last term in \eqref{iota1} is merely the projection onto the subspace
$$
\overline{{\rm span}}\{e_{i_1}\otimes\ldots\otimes e_{i_m}: m\geq 1,
j>i_1<i_2<\ldots<i_m\}
$$
of $\cf_m$, and belongs to $\gam$ by virtue of \eqref{comrul2}. 
Since in our case the tail algebra is given by
$$
\xi_\omega^\perp=\bigcap_{n\in\bn}\left(\bigvee_{|j|>n}\iota_j\big(\bm_2(\bc)\oplus \bc\big)\right)''\,,
$$
from \eqref{iota1} it easily follows that $\sum_{k\in\bz}a^\dag_ka_k$ belongs to the tail algebra, which means 
$P_\zeta$  sits in it as well because of the equality $\sum_{k\in\bz}a^\dag_ka_k=I-P_\zeta$.\\
Thus,
we only need to prove the other inclusion, namely $\xi_\omega^\perp\subset\mathbb{C}P_\zeta\oplus\mathbb{C}P_\zeta^\perp$.
To this end, let us set $\mathcal{R}_n:=\left(\bigvee_{|j|>n}\iota_j\left(\mathbb{M}_2(\mathbb{C}\right)\oplus\mathbb{C})\right)''$ for every $n$. By its very definition, it is clear
that $\mathcal{R}_n$ is the strong closure of $\gam_n$, the unital $C^*$-algebra generated by
$\{a_j:|j|>n\}$.\\
Now if we set
$$
\cf_m^{(n)}:=\overline{\rm span}\{e_{i_1}\otimes\ldots\otimes e_{i_m}: m\geq 1,\,\, n\geq i_1<\ldots < i_m\}\,,
$$
it is a matter of trivial computations to ascertain that $\cf_m^{(n)}$ (along with its orthogonal complement $(\cf_m^{(n)})^\perp$) is invariant for $\gam_n$.
But then any operator sitting in the von Neumann algebra $\mathcal{R}_n$, too, must preserve
both $\cf_m^{(n)}$ and $(\cf_m^{(n)})^\perp$. Phrased differently, for every natural number $n$ we have proved the inclusion
$\mathcal{R}_n\subset\mathcal{G}_n$, where $\mathcal{G}_n\subset\mathcal{B}(\cf_m)$ is
the von Neumann algebra of all block diagonal operators w.r.t. the decomposition of the monotone Fock
space $\cf_m$ into the direct sum $\cf_m^{(n)}\oplus (\cf_m^{(n)})^\perp$. The thesis will then be proved if we make sure that
$\bigcap_n\mathcal{G}_n$ reduces to $\mathbb{C}P_\zeta\oplus\mathbb{C}P_\zeta^\perp$. This can be seen
as follows. Any bounded linear operator $T$ lies in the intersection $\bigcap_n\mathcal{G}_n$ if and only if, for every
$n\in\bn$, it preserves both $\cf_m^{(n)}$ and $(\cf_m^{(n)})^\perp$. As $\cf_m^{(n)}\subset \cf_m^{(n+1)}$ (and $(\cf_m^{(n)})^\perp\supset (\cf_m^{(n+1)})^\perp$),
$T$ must preserve $\bigcup_n \cf_m^{(n)}$ (and $\bigcap_n (\cf_m^{(n)})^\perp$), and the conclusion is thus arrived at because
$\bigcup_n \cf_m^{(n)}=(\mathbb{C}\zeta)^\perp$.

Finally, the general case of a proper convex combination $\varphi$ of $\omega$ and $\omega_\infty$ can be handled
easily by taking into account the decomposition of the GNS representation $\pi_\varphi$ we discussed above.
\end{proof}

\begin{rem}\label{stationary}
It is a classical result  due to Hewitt and Savage, \cite{HS}, that exchangeable and tail algebra of a sequence of 
exchangeable random variables are the same. Furthermore, under the same hypothesis exchangeable and  stationary algebra
also coincide by virtue of a theorem of Olshen, see \cite{Ols}.
As soon as quantum processes are looked at, though, these classical results
may cease to hold. For instance, Boolean processes are certainly a case in point, for in \cite{Fbo}
symmetric and stationary algebra of any symmetric Boolean state are shown to differ. \\
Monotone processes are worse behaved. Indeed, the exchangeable algebra may even be not defined
because it is still not  known whether the maps $T_\sigma$ extend to the $C^*$-algebra $\gam$.
To make matters worse, even if  the $T_\sigma$'s did extend to $\gam$, it would not be clear if
they may be extended to the von Neumann algebra $\pi_\varphi(\gam)''$ as well.  
Nevertheless, what we do know is  that the tail algebra
$\xi_\om^\perp=\mathbb{C}P_\zeta+\mathbb{C}P_\zeta^\perp\cong \mathbb{C}\oplus\mathbb{C}$ is strictly included in  the stationary algebra $\pi_\om(\mathfrak{M})''_\bz$. Indeed, by definition, $\pi_\om(\mathfrak{M})''_\bz$ is given by all operators
$T\in\mathcal{B}(\cf_m)$  that commute with the unitary $U$ implementing $\a_\tau$. This acts on the canonical basis of  $\cf_m$ as
\begin{eqnarray*}
&&U\zeta:=\zeta\\
&&U e_{i_1}\otimes e_{i_2}\ldots \otimes e_{i_n}:= e_{i_1+1}\otimes e_{i_2+1}\ldots \otimes e_{i_n+1},\quad i_1<i_2<\cdots<i_n
\end{eqnarray*}
Therefore, the unitary $U$ does sit in $\pi_\om(\mathfrak{M})''_\bz$ while not belonging to $\xi_\om^\perp$.\\
Furthermore, it may be worth noting that although the stationary $C^*$-subalgebra $\gam_\bz:=\{X\in\gam: \a_\tau(X)=X\}$ is trivial \cite[Proposition 5.11]{CFL},
the stationary von Neumann algebra $\pi_\om(\gam)''_\bz$ is not even
commutative. This can be seen by observing that the operator $S$ defined by
\begin{eqnarray*}
&&S\zeta:=\zeta\\
&&Se_{i_1}\otimes e_{i_2}\ldots \otimes e_{i_n}:= e_{i_1}\otimes e_{i_2}\ldots \otimes e_{i_n+1}, \quad i_1<i_2<\cdots<i_n
\end{eqnarray*}
commutes with $U$, but $S$ and its adjoint do not commute since $S$ is a proper isometry ($e_1\otimes e_2$ is not in the range of $S$), to wit $S^*S=I$ but $SS^*<I$.
\end{rem}

The following is a straightforward adaptation to the monotone case of the arguments employed to arrive at
Formula $(4)$  in \cite{Fbo}. Denote by $\tilde{\omega}$  the unique normal extension of $\omega$ to $\mathcal{B}(\cf_m)$, {\it i.e.}
$\tilde{\omega}(T)=\langle T\zeta, \zeta\rangle$, for any $T\in\mathcal{B}(\cf_m)$.

\begin{prop}
For any $\varphi=\gamma\omega+(1-\gamma)\omega_\infty$, with $0\leq\gamma\leq 1$,   there exist conditional expectations $E$ from $\pi_\f(\gam)''$ onto $\xi_{\f}^{\perp}$.
In addition:
\begin{enumerate}
\item If $\gamma=0$, \emph{i.e.} $\varphi=\omega_\infty$, the only such conditional expectation is
$E={\rm id}_{\bc}:\bc\rightarrow \bc$.

\medskip

\item If $\gamma=1$, \emph{i.e.} $\varphi=\omega$ any such conditional expectation $E$ is of the form $E_\psi:\mathcal{B}(\cf_m)\rightarrow \bc P_\zeta \oplus \bc P_\zeta^\perp$, where
\begin{equation*}
\label{effe2}
E_\psi(X)=\widetilde{\om}(X)P_{\zeta}+\psi(P_{\zeta}^{\perp}X P_{\zeta}^{\perp})P_{\zeta}^{\perp}\,,\quad X\in\mathcal{B}(\cf_m)
\end{equation*}
and $\psi$ is a state on $\mathcal{B}(\cf_m)$ such that $\psi(P_\zeta^\perp)=1$.\\
Moreover, any $E_\psi$ is $\widetilde{\omega}$-invariant.

\medskip

\item If $0<\gamma<1$, any such conditional expectation $E$ is of the form $E_\psi\oplus {\rm id}_\bc:\mathcal{B}(\cf_m) \oplus \bc \rightarrow (\bc P_\zeta \oplus \bc P_\zeta^\perp)\oplus \bc$. 

\noindent Moreover, any $E$ of form above preserves the vector state $\langle\cdot\xi_\f,\xi_\f\rangle$.
\end{enumerate}
\end{prop}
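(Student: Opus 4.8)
The plan is to isolate the one substantial case $\gamma=1$ and then deduce the rest from the direct-sum description in Proposition~\ref{tailalgebra}. For $\gamma=0$ there is nothing to do: Proposition~\ref{tailalgebra} gives $\pi_{\om_\infty}(\gam)''=\bc=\xi_{\om_\infty}^\perp$, so the sole unital map is $\mathrm{id}_\bc$, which is trivially a conditional expectation. Existence in the other cases will be obtained simply by exhibiting $E_\psi$ and checking it is a norm-one projection, so the real content is the characterization, which I would carry out for $\gamma=1$.

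For $\gamma=1$ we have $\pi_\om(\gam)''=\cb(\cf_m)$ and the target subalgebra $\xi_\om^\perp=\bc P_\zeta\oplus\bc P_\zeta^\perp$. First I would recall that, by Tomiyama's theorem, any conditional expectation $E$ onto this subalgebra is completely positive and a bimodule map over $\bc P_\zeta\oplus\bc P_\zeta^\perp$. Writing $E(X)=\alpha(X)P_\zeta+\beta(X)P_\zeta^\perp$ and applying the bimodule identities $P_\zeta E(X)P_\zeta=E(P_\zeta X P_\zeta)$, $P_\zeta^\perp E(X)P_\zeta^\perp=E(P_\zeta^\perp X P_\zeta^\perp)$ and $P_\zeta E(X)P_\zeta^\perp=E(P_\zeta X P_\zeta^\perp)$ to the corner decomposition $X=P_\zeta X P_\zeta+P_\zeta X P_\zeta^\perp+P_\zeta^\perp X P_\zeta+P_\zeta^\perp X P_\zeta^\perp$ shows at once that the two off-diagonal corners are annihilated. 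The decisive point is that $P_\zeta$ is the rank-one projection onto $\bc\zeta$, whence $P_\zeta X P_\zeta=\widetilde{\omega}(X)P_\zeta$ and therefore $\alpha(X)=\widetilde{\omega}(X)$ is forced; on the remaining corner the recipe $\psi(P_\zeta^\perp X P_\zeta^\perp):=\beta(X)$ defines a state with $\psi(P_\zeta^\perp)=1$, and assembling the pieces yields the asserted formula for $E_\psi$. Conversely, for any such $\psi$ the map $E_\psi$ is visibly unital, completely positive (a sum of the compression-then-state maps multiplied by the positive elements $P_\zeta,P_\zeta^\perp$) and restricts to the identity on $\bc P_\zeta\oplus\bc P_\zeta^\perp$, hence is a genuine conditional expectation; its $\widetilde{\omega}$-invariance follows in one line from $\widetilde{\omega}(P_\zeta)=1$ and $\widetilde{\omega}(P_\zeta^\perp)=0$.

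For $0<\gamma<1$ the source $\cb(\cf_m)\oplus\bc$ and the target $\xi_\f^\perp=(\bc P_\zeta\oplus\bc P_\zeta^\perp)\oplus\bc$ are the orthogonal direct sums of Proposition~\ref{tailalgebra}, and the central projections $(I,0)$ and $(0,1)$ of $\cb(\cf_m)\oplus\bc$ both lie in $\xi_\f^\perp$. Since a conditional expectation is a bimodule map, it must commute with these central projections, so it splits as $E=E_1\oplus E_2$ with $E_1$ a conditional expectation onto $\bc P_\zeta\oplus\bc P_\zeta^\perp$ and $E_2$ one onto $\bc$; the two previous cases then give $E_1=E_\psi$ and $E_2=\mathrm{id}_\bc$. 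Preservation of the vector state $\langle\,\cdot\,\xi_\f,\xi_\f\rangle$ finally drops out of $\xi_\f=\gamma^{1/2}\zeta\oplus(1-\gamma)^{1/2}$: on $X\oplus\mu$ this state equals $\gamma\widetilde{\omega}(X)+(1-\gamma)\mu$, and applying $E$ leaves it unchanged because $E_\psi$ is $\widetilde{\omega}$-invariant.

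I expect no serious obstacle here, in line with the remark that this adapts the Boolean argument of \cite{Fbo}; the one point that genuinely drives the result is the rank-one collapse of the $P_\zeta$-corner, which pins the first coefficient to $\widetilde{\omega}$ while leaving the state $\psi$ on the infinite-dimensional corner $P_\zeta^\perp\cb(\cf_m)P_\zeta^\perp$ entirely free.
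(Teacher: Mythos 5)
Your argument is correct and is essentially the proof the paper has in mind: the paper omits the details, deferring to the Boolean case in \cite{Fbo}, and the adaptation consists precisely of your bimodule/corner computation, where the rank-one identity $P_\zeta X P_\zeta=\widetilde{\om}(X)P_\zeta$ forces the first coefficient and the $P_\zeta^\perp$-corner leaves a free state $\psi$ with $\psi(P_\zeta^\perp)=1$. The reduction of the cases $\gamma=0$ and $0<\gamma<1$ to the direct-sum decomposition of Proposition \ref{tailalgebra}, and the one-line checks of $\widetilde{\om}$-invariance and preservation of $\langle\cdot\,\xi_\f,\xi_\f\rangle$, are likewise exactly as intended.
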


In what follows we will have to make use of a technical result that guarantees the existence of states with suitable properties.

\begin{lem}\label{suitstate}
There exist states on $\mathcal{B}(\cf_m)$ that vanish on both $\mathfrak{B}$ and
$\mathcal{K}(\cf_m)$, the compact operators on the monotone Fock  space $\cf_m$.
\end{lem}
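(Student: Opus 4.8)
The plan is to realize the desired state as a weak-$*$ cluster point of a sequence of vector states whose vectors are pushed off ``to the left'' along the integer lattice. Concretely, I would set $\eta_n:=e_{(-n)}\in\cf_m$, the single-particle vectors sitting at the strictly decreasing positions $-n$, and consider the vector states $\phi_n:=\langle\,\cdot\,\eta_n,\eta_n\rangle$ on $\mathcal{B}(\cf_m)$. Since $\{\eta_n\}_{n\ge 1}$ is an orthonormal sequence, it converges weakly to $0$; consequently, for any compact $K\in\mathcal{K}(\cf_m)$ one has $\|K\eta_n\|\to 0$ and hence $\phi_n(K)\to 0$. By weak-$*$ compactness of the state space of $\mathcal{B}(\cf_m)$, the sequence $\{\phi_n\}$ admits a cluster point $\psi$, which is again a state; and because $\phi_n(K)\to 0$ for every compact $K$, the cluster point automatically satisfies $\psi\lceil_{\mathcal{K}(\cf_m)}=0$.

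It remains to ensure that $\psi$ also vanishes on $\gb$. Since $\gb$ is the norm closure of $\gb_0=\mathrm{span}\{X\in\gam_0:l(X)>0\}$ and $\psi$ is norm continuous, it suffices to check that $\phi_n(X)\to 0$ for each positive-length word $X$. Here the crucial point is the choice of direction: every such $X$ is a product of elementary creators and annihilators, so it has a well-defined rightmost factor, either some $a_j$ or some $a^\dagger_k$. If the rightmost factor is an annihilator $a_j$, then $a_j\eta_n=a_j e_{(-n)}=0$ as soon as $-n\neq j$; if it is a creator $a^\dagger_k$, then $a^\dagger_k\eta_n=a^\dagger_k e_{(-n)}=0$ as soon as $k\geq -n$. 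In either case, for $n$ large enough (depending only on the finitely many indices occurring in $X$) we get $X\eta_n=0$, whence $\phi_n(X)=0$ eventually. Therefore $\psi(X)=0$ for every positive-length word $X$, and by linearity and continuity $\psi\lceil_{\gb}=0$.

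I expect the only genuinely delicate point to be precisely this choice of the ``direction at infinity''. Pushing the single-particle vectors to $+\infty$ would fail: the $\pi$-forms $a_k a^\dagger_k=I-\sum_{l\le k}a^\dagger_l a_l$ act as the identity on all vectors whose smallest index exceeds $k$, so the corresponding vector states would converge to $1$ rather than $0$ on these positive-length words, and the limiting state would not annihilate $\gb$. Sending the vectors to $-\infty$ instead obstructs every creation operator occurring in a word through the order constraint $k<-n$, which is exactly what forces simultaneous vanishing on both the annihilator-terminated and the creator-terminated words. Once this is correctly set up, the remaining verifications are the routine facts that the vector states of an orthonormal null sequence kill the compacts and that a weak-$*$ cluster point of states is again a state.
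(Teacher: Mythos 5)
Your proof is correct, and its computational core is the same as the paper's: both arguments push the single-particle vectors $e_{(-n)}$ to $-\infty$ and exploit the two facts that every positive-length $\lambda$- or $\pi$-form annihilates $e_{(-n)}$ once $-n$ lies below all indices occurring in it (regardless of whether its rightmost factor is an annihilator or a creator), and that a compact operator sends an orthonormal sequence to a norm-null sequence. Where you differ is in how the limiting state is manufactured: you take a weak-$*$ cluster point of the vector states $\langle\,\cdot\,e_{(-n)},e_{(-n)}\rangle$, using only weak-$*$ compactness of the state space, whereas the paper first proves the norm estimate $\|A+K+\lambda I\|\geq|\lambda|$ on the (not necessarily closed) unital $*$-algebra $\mathfrak{B}+\mathcal{K}(\cf_m)+\mathbb{C}I$, defines the functional $A+K+\lambda I\mapsto\lambda$ there, and extends it to all of $\mathcal{B}(\cf_m)$ by the Segal extension theorem. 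Your route is slightly more elementary and self-contained; the paper's route has the mild advantage of exhibiting the state explicitly as $\om_\infty$ on the subalgebra before extending, which matches the notation $\widetilde{\om}_\infty$ used later. Your closing remark about the choice of direction is also accurate: at $+\infty$ the $\pi$-forms $a_ka_k^\dag=I-\sum_{l\le k}a_l^\dag a_l$ converge to $1$ on the corresponding vector states, so the limit would not kill $\mathfrak{B}$.
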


\begin{proof}
We claim that $\|A+K+\lambda I\|\geq |\lambda|$ for any $A\in\mathfrak{B}$, $K\in\mathcal{K}(\cf_m)$, and
$\lambda\in\mathbb{C}$.
Note that $\mathfrak{B}+\mathcal{K}(\cf_m)$ is a (possibly not uniformly closed) $*$-subalgebra of $\mathcal{B}(\cf_m)$. Thanks to the claimed inequality, the linear functional
$\om_\infty$ defined on the unital
$*$-algebra $\mathfrak{B}+\mathcal{K}(\cf_m)+\mathbb{C}I$ by $\om_\infty(A+K+\lambda I):=\lambda$
is bounded and positive as its norm, which is equal to $1$, is attained on the identity $I$. By virtue of the Segal extension theorem (see \emph{e.g.} \cite{Kad1}, Theorem 4.3.13), it can be extended to a state $\widetilde{\om}_\infty$
defined on the whole $\mathcal{B}(\cf_m)$ which by definition vanishes on both
$\mathfrak{B}$ and $\mathcal{K}(\cf_m)$. \\
All is left to do is prove the  claimed inequality, which can be done by a variation of the proof of Proposition 5.8 in \cite{CFL}.
Thanks to a standard approximation argument, it is enough to ascertain  that the inequality holds true
for $A$ in $\mathfrak{B}_0$. Now any such $A$ is a sum $\sum_{i\in F } \beta_i X_i$, where $F$ is a finite set and
$X_i$ is either a $\lambda$-form or a $\pi$-form with $l(X_i)>0$, namely
$X_i = Y_ia_{j_i}^\natural$, for any $i\in F$,  where $a_{j_i}^\natural$ is either $a_{j_i}$ or $a_{j_i}^\dag$. If now $n\in\bz$ is less than $\min\{j_i: i\in F\}$, we have
$X_i e_n=0$ for every $i\in F$. But then
\begin{align*}
\|A+K+\lambda I\|&\geq\|(A+K+\lambda I)e_n\|\\
&=\|Ke_n+\lambda e_n\|\\
&\geq |\lambda|- \|Ke_n\|
\end{align*}
Taking the limit of the above inequality as $n$ tends to $-\infty$, we eventually get the claim because
$\|Ke_n\|$ tends to $0$ by compactness (a compact operator sends any orthonormal system to a sequence converging to $0$ in norm).
\end{proof}

By a slight abuse of notation we will denote by $\widetilde{\om}_\infty$ any of the states whose existence
has been established above.\\

Our next goal is to prove that all monotone symmetric states give rise to
\emph{conditionally independent and identically distributed} (with respect to the tail algebra) stochastic processes,
and conversely that any conditonally independent and identically distributed process is exchangeable.
For the reader's convenience, we recall the necessary definitions.
\begin{defin}\label{iid}
A stochastic process $(\ga, \ch, \{\iota_j\}_{j\in J}, \xi)$ with corresponding
state $\varphi$ is \emph{conditionally independent and identically distributed} w.r.t. the tail algebra if there exists a conditional expectation\footnote{Given an inclusion $\mathfrak{B}\subset\mathfrak{A}$ of $C^*$-algebras, a conditional expectation $E: \mathfrak{A}\rightarrow\mathfrak{B}$ is a positive linear map such that $E(b)=b$, for any $b\in\mathfrak{B}$,
and $E(b_1ab_2)=b_1E(a)b_2$, for any $b_1, b_2\in\mathfrak{B}$, $a\in\ga$.}
$$
E_{\f}:\bigvee_{j\in J} \iota_{j}(\ga)\rightarrow \xi_{\f}^{\perp}
$$
preserving the vector state $\langle \cdot \xi, \xi\rangle$, such that

\bigskip

(i) $\langle XY\xi,\xi\rangle=\langle E_{\f}(X)E_{\f}(Y)\xi,\xi\rangle$, for each finite subsets $I,K\subset J$, $I\cap K=\emptyset$, and
$$
X\in \bigg(\bigvee_{i\in I} \iota_i(\ga)\bigg)\bigvee \xi_{\f}^{\perp}\,, \quad Y\in \bigg(\bigvee_{k\in K} \iota_k(\ga)\bigg)\bigvee \xi_{\f}^{\perp}\,.
$$

(ii) $E_{\f}(\iota_i(a))=E_{\f}(\iota_k(a))$, for each $i,k\in J$, and $a\in\ga$.\\
\end{defin}

\begin{thm}\label{definetti}
For a state $\varphi$ on the monotone $C^*$-algebra  $\gam$ the following conditions are equivalent:
\begin{itemize}
\item[(i)] $\varphi$ is symmetric;
\item[(ii)] the stochastic process corresponding to $\varphi$ is conditionally independent and identically distributed w.r.t. the tail algebra.
\end{itemize}
\end{thm}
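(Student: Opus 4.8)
The plan is to establish the two implications separately, leaning on the fact that symmetric states are exactly the segment $\f=\g\om+(1-\g)\om_\infty$ (with $\g\in[0,1]$), on the tail algebras computed in Proposition \ref{tailalgebra}, and on the conditional expectations $E_\psi$ produced in the preceding proposition. For (i)$\Rightarrow$(ii) I would take as $E_\f$ the expectation $E_{\widetilde\om_\infty}$ (or $E_{\widetilde\om_\infty}\oplus\id_\bc$ when $0<\g<1$), where $\widetilde\om_\infty$ is the state of Lemma \ref{suitstate}; its existence and the fact that it preserves $\langle\,\cdot\,\xi_\f,\xi_\f\rangle$ being already granted, only conditions (i) and (ii) of Definition \ref{iid} need checking.

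For the identical distribution (ii), formula \eqref{iota} shows that $\iota_i(x)-\iota_k(x)$ is, up to the difference of two annihilators, a finite sum of length-two words, hence lies in $\gb$; since $\widetilde\om$ annihilates it and $\widetilde\om_\infty$ kills both $\gb$ and $\ck(\cf_m)$ — so that $\widetilde\om_\infty(P_\zeta^\perp Z P_\zeta^\perp)=\widetilde\om_\infty(Z)$, the correction being compact — one obtains $E_\f(\iota_i(x))=E_\f(\iota_k(x))$ for all $i,k$, the scalar component present when $0<\g<1$ being constant because $\om_\infty(\iota_i(x)-\iota_k(x))=0$. For the factorization (i), the identity $E_\f(X)\zeta=\widetilde\om(X)\zeta$ collapses its right-hand side to $\widetilde\om(X)\widetilde\om(Y)$, so it suffices to prove $\langle XY\zeta,\zeta\rangle=\langle X\zeta,\zeta\rangle\langle Y\zeta,\zeta\rangle$ for $X\in\mathcal N_I:=(\bigvee_{i\in I}\iota_i(\ga))\vee\xi_\f^\perp$ and $Y\in\mathcal N_K$ with $I\cap K=\emptyset$. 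As the generators $a_i,a_i^\dag,\sum_{k<i}a_k^\dag a_k$ together with $P_\zeta,P_\zeta^\perp$ preserve $\overline{\mathrm{span}}\{e_{(i_1,\dots,i_m)}:i_l\in I\}$, the cyclic subspace $\mathcal H_I:=\overline{\mathcal N_I\zeta}$ equals this span, and for disjoint $I,K$ the monotone basis gives $(\mathcal H_I\ominus\bc\zeta)\perp(\mathcal H_K\ominus\bc\zeta)$; since $\langle XY\zeta,\zeta\rangle=\langle Y\zeta,X^*\zeta\rangle$ with $Y\zeta\in\mathcal H_K$ and $X^*\zeta\in\mathcal H_I$, this orthogonality forces $\langle Y\zeta,X^*\zeta\rangle=\langle Y\zeta,\zeta\rangle\langle\zeta,X^*\zeta\rangle=\widetilde\om(X)\widetilde\om(Y)$, as required, and the case $0<\g<1$ follows by reading the same computation inside $\cf_m\oplus\bc$.

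The converse (ii)$\Rightarrow$(i) I expect to be shorter. From (ii) and the vector-state invariance of $E_\f$, the one-site quantity $\f(a_i^\dag a_i)=\f(\iota_i(a^*a))=\langle E_\f(\iota_i(a^*a))\xi_\f,\xi_\f\rangle$ does not depend on $i$; call it $t$. The operators $a_i^\dag a_i$ are mutually orthogonal projections with $\sum_i a_i^\dag a_i=I-P_\zeta\le I$, so $(2N+1)\,t=\f\big(\sum_{|i|\le N}a_i^\dag a_i\big)\le 1$ for every $N$, forcing $t=0$. Thus $\|\pi_\f(a_i)\xi_\f\|^2=\f(a_i^\dag a_i)=0$, i.e. $\pi_\f(a_i)\xi_\f=0$ for all $i$. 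Consequently $\f$ annihilates every $\l$-form of positive length (if it ends in an annihilator this hits $\xi_\f$, while a pure product of creators is handled by passing to the adjoint), whereas $\f(a_ia_i^\dag)=\|\pi_\f(a_i^\dag)\xi_\f\|^2$ is again $i$-independent, say equal to $\g$. Comparing with the values of $\om$ and $\om_\infty$ on the Hamel basis $\{X_\l\}_{\l\in\G}\cup\{a_ia_i^\dag\}$ then yields $\f=\g\om+(1-\g)\om_\infty$ on $\gam_0$, hence on $\gam$, so $\f$ is symmetric.

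The main obstacle is the factorization step in the forward implication: one must pin down the cyclic subspaces $\mathcal H_I$ precisely enough to read off their orthogonality modulo $\bc\zeta$ — which is where the concrete monotone Fock structure and the explicit description of $\xi_\f^\perp$ from Proposition \ref{tailalgebra} are indispensable — and then transport the identity faithfully through the GNS direct-sum decomposition $\mathcal H_\f=\cf_m\oplus\bc$ valid for $0<\g<1$. By contrast the converse, which at first sight threatens to require controlling arbitrarily long mixed moments, hinges only on the single observation that being conditionally i.i.d. forces the annihilators to kill the cyclic vector, after which every higher correlation collapses.
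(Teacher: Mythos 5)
Your proof is correct, but both implications are run differently from the paper's, and the comparison is worth recording. For (i)$\Rightarrow$(ii) the paper does not use cyclic subspaces: it decomposes each $X\in\bigl(\bigvee_{i\in H}\iota_i(\ga)\bigr)\vee\xi_\om^\perp$ explicitly as $\lambda P_\zeta+T_0(X)+B$, with $B$ a finite combination of $\lambda$- and $\pi$-forms of the Hamel basis indexed in $H$, expands $XY$, and reduces everything to the single moment identity $\widetilde\om(BC)=\widetilde\om(B)\widetilde\om(C)$, valid because $\widetilde\om$ vanishes on $\lambda$-forms and equals $1$ on $\pi$-forms of the basis. Your orthogonality route — $\mathcal N_I\zeta\subseteq\bc\zeta\oplus\overline{\rm span}\{e_{(i_1,\dots,i_m)}:i_l\in I\}$, hence $(\mathcal H_I\ominus\bc\zeta)\perp(\mathcal H_K\ominus\bc\zeta)$ and $\langle XY\zeta,\zeta\rangle=\widetilde\om(X)\widetilde\om(Y)$, which matches $\langle E_{\widetilde\om_\infty}(X)E_{\widetilde\om_\infty}(Y)\zeta,\zeta\rangle$ since $E_{\widetilde\om_\infty}(X)\zeta=\widetilde\om(X)\zeta$ — is sound and has the advantage of treating the von Neumann algebras $\mathcal N_I$ directly rather than only their dense $*$-subalgebras; just state explicitly that the generating set of $\mathcal N_I$ (including $P_\zeta$, $P_\zeta^\perp$) is self-adjoint and that a closed subspace invariant under a self-adjoint family remains invariant under its weak closure. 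For (ii)$\Rightarrow$(i) the paper uses both clauses of Definition \ref{iid}: it factorizes mixed moments through $E_\varphi$, swaps an index for a far-away one by identical distribution, and kills the resulting word via \eqref{comrul}. Your argument is genuinely more economical: it uses only identical distribution together with the vector-state invariance of $E_\varphi$, since constancy of $\varphi(a_i^\dag a_i)$ plus $\sum_{|i|\le N}a_i^\dag a_i\le I$ (the $a_i^\dag a_i$ being mutually orthogonal projections by \eqref{comrul}--\eqref{comrul2}) forces $\pi_\varphi(a_i)\xi_\varphi=0$, after which every positive-length $\lambda$-form is annihilated, only $\varphi(a_ia_i^\dag)=\gamma$ survives on the Hamel basis, and $\varphi=\gamma\om+(1-\gamma)\om_\infty$ follows; this even yields the slightly stronger statement that identical distribution alone implies symmetry, without invoking conditional independence.
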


\begin{proof}
We start by proving that $(i)$ implies $(ii)$.
As any symmetric state is a convex combination of the vacuum state $\omega$ and  the state at infinity $\om_\infty$, it is enough to consider
the case $\varphi=\om$, as is done in \cite{Fbo}. This amounts to exhibiting a conditional expectation
$E:\mathcal{B}(\cf_m)\rightarrow\xi_\omega^\perp$ such that:
\begin{enumerate}
\item  Given any two finite sets $H, J\subset\mathbb{Z}$ with $H\cap J=\emptyset$, for any
$X\in \left(\bigvee_{i\in H} \iota_i(\bm_2(\mathbb{C})\oplus\mathbb{C})\right)\bigvee\xi_\om^\perp$ and $Y\in \left(\bigvee_{j\in J} \iota_j(\bm_2(\mathbb{C})\oplus\mathbb{C})\right)\bigvee\xi_\om^\perp$ we have:
$$\langle XY\zeta, \zeta \rangle=\langle E(X)E(Y)\zeta, \zeta\rangle$$
\item $E(\iota_j(X))=E(\iota_k(X))$ for any $X\in \bm_2(\mathbb{C})\oplus\mathbb{C}$, $j, k\in\bz$.
\end{enumerate}
We will show that the two conditions above are both satisfied if we take $E=E_{\widetilde{\om}_\infty}$,  namely
$E_{\widetilde{\om}_\infty}(X)=\widetilde{\om}(X)P_{\zeta}+\widetilde{\om}_\infty(P_{\zeta}^{\perp}X P_{\zeta}^{\perp})P_{\zeta}^{\perp}$, for any $X\in\mathcal{B}(\cf_m)$, where $\widetilde{\om}_\infty$ is a state as in the statement of Lemma \ref{suitstate}.\\
We start by checking the second condition. Given $a=(\alpha_{i,j})\in \bm_2(\mathbb{C})$ and $\beta\in\mathbb{C}$, for any
$l\in\bz$ we have
$$\iota_l(a\oplus\beta)=\alpha_{1,1} a_la^\dag_l +\alpha_{1, 2}a_l +\alpha_{2, 1}a^\dag_l+ \alpha_{2, 2}a^\dag_l a_l+\beta\sum_{k<l}a^\dag_k a_k.$$
But then $E_{\widetilde{\om}_\infty}(\iota_l(a\oplus\beta))=\widetilde{\omega}(\iota_l(a\oplus\beta))P_\zeta+\widetilde{\om}_\infty(P_\zeta^\perp\iota_l(a\oplus\beta) P_\zeta^\perp)P_\zeta^\perp$.
The first summand is easily seen not to depend on $l$ since
$$\om(\iota_l(a\oplus\beta))=\alpha_{1,1}.$$
The second summand does not depend on $l$ either  because
$$\widetilde{\om}_\infty(P_\zeta^\perp\iota_l(a\oplus\beta) P_\zeta^\perp)=\beta$$
 thanks to
the properties of $\widetilde{\om}_\infty$ as stated in Lemma \ref{suitstate}, the fact that $\mathcal{K}(\cf_m)$ is a two-sided ideal in $\mathcal{B}(\cf_m)$, and the commutation rules \eqref{comrul2}.\\

We now move on to check the first condition. We start by claiming that any $X$ in $\left(\bigvee_{i\in H} \iota_i(\bm_2(\mathbb{C})\oplus\mathbb{C})\right)\bigvee\xi_\om^\perp$  can be written as a sum
$$X=\lambda P_\zeta + T_0(X)+ B$$
where  $\lambda\in\mathbb{C}$, $T_0(X)$ is an operator such that either $T_0(X)\zeta=0$ or
$T_0(X)\zeta$ is a linear combination of tensors whose indices are all in $H$ (in particular,
$\widetilde{\om}(T_0(X))=(T_0(X)\zeta, \zeta)=0$), and $B$ is a linear combination of the type
$$B=\sum_{j_1\in F_1} \gamma_{j_1}B_{j_1}+\sum_{j_2\in F_2}\gamma'_{j_2}B_{j_2}'$$
where $F_1, F_2$ are finite sets, the $B_{j_1}$'s, $j_1\in F_1$, are $\l$-forms whose indices are chosen from $H$ and
the ${B}_{j_2}'$'s, $j_2\in F_2$, are $\pi$-forms whose indices are chosen from $H$ as well. In addition, we do not harm generality if we also assume that the $\lambda$-forms and the $\pi$-forms above are all taken from
the Hamel basis of $\gam_0$.

The above decomposition is a straightforward consequence of the following facts:\\
{\it a)} for any $a, a'\in\bm_2(\bc)$ and $\beta, \beta'\in\bc$, $\iota_j(a\oplus\beta)\iota_k(a'\oplus\beta')$ is a linear combination of $\lambda$-forms and $\pi$-forms whose
indices are  $i$ and $k$, as follows from Lemma 5.4 in \cite{CFL} and \eqref{comrul}--\eqref{comrul2};\\
{\it b)} For $a=(\alpha_{i,j})\in\bm_2(\bc)$, $\iota_j(a\oplus\beta)P_\zeta= \alpha_{1,1}P_\zeta+\alpha_{2,1}a^\dag_j P_\zeta$.

By the same token, any $Y$ in  $\left(\bigvee_{j\in J} \iota_j(\bm_2(\mathbb{C})\oplus\mathbb{C})\right)\bigvee\xi_\om^\perp$
can be written as a sum $Y=\mu P_\zeta + T_0(Y)+ C$,
where $\mu\in\mathbb{C}$, while $T_0(Y)$ enjoys similar properties to $T_0(X)$ and $C$ is given by
$$C=\sum_{h_1\in G_1} \beta_{h_1}C_{h_1}+\sum_{h_2\in G_2}\beta'_{h_2}C_{h_2}'$$
where $G_1, G_2$ are finite sets, the $C_{h_1}$'s, $h_1\in G_1$, are $\l$-forms whose indices are chosen from $J$ and
the ${C}_{h_2}'$'s, $h_2\in G_2$, are $\pi$-forms whose indices are chosen from $J$ as well.

This enables us to write the product $XY$ in a form that is suited to the computations we need to make. Indeed, we have:
\begin{equation}\label{prod}
XY=\l\mu P_\zeta +\l P_\zeta C+\mu BP_\zeta+{T_0}(XY)+ BC
\end{equation}
where $T_0(XY)$ is an operator such that $\widetilde{\om}(T_0(XY))=(T_0(XY)\zeta, \zeta)=0$.
In more detail, $T_0(XY)$ accounts for the sum of the following terms arising from the product $XY$:
$$\lambda P_\zeta T_0(Y),\,\,\mu T_0(X)P_\zeta,\,\,T_0(X)T_o(Y),\,\,T_0(X)C,\,BT_0(Y)$$
Thanks to the properties of $T_0(X)$ and $T_0(Y)$, the assumption
$H\cap J=\emptyset$ readily implies that $\widetilde{\om}$ vanishes on each of the above terms.
Now we have $$E_{\widetilde{\om}_\infty}(X)=(\lambda+\widetilde{\om}(B))P_\zeta+\widetilde{\om}_\infty(P_\zeta^\perp X P_\zeta^\perp)P_\zeta^\perp$$ and 
$$E_{\widetilde{\om}_\infty}(Y)=(\mu+\widetilde{\om}(C))P_\zeta+\widetilde{\om}_\infty(P_\zeta^\perp Y P_\zeta^\perp)P_\zeta^\perp\,,$$ which means
\begin{equation*}
E_{\widetilde{\om}_\infty}(X)E_{\widetilde{\om}_\infty}(Y)=
(\lambda+\widetilde{\om}(B))(\mu+\widetilde{\om}(C))P_\zeta+
\widetilde{\om}_\infty(P_\zeta^\perp X P_\zeta^\perp)\widetilde{\om}_\infty(P_\zeta^\perp Y P_\zeta^\perp)P_\zeta^\perp
\end{equation*}
and so the equality
$$\langle E_{\widetilde{\om}_\infty}(X)E_{\widetilde{\om}_\infty}(Y)\zeta, \zeta\rangle=\lambda\mu+\lambda\widetilde{\om}(C)+\mu\widetilde{\om}(B)+\widetilde{\om}(B)\widetilde{\om}(C)$$
is got to. In addition, thanks to Equality (\ref{prod}) we have
$$\langle XY\zeta, \zeta\rangle=\l\mu+\l\widetilde{\om}(C)+\mu\widetilde{\om}(B)+\widetilde{\om}(BC).$$
The thesis will then follow if we prove that $\widetilde{\om}(BC)=\widetilde{\om}(B)\widetilde{\om}(C)$.
Arguing as in the proof of \cite[Theorem 3.4]{CFG}, one finds
\begin{align*}
\widetilde{\om}(BC)&=\widetilde{\om}\left(\sum_{j_2\in F_2, h_2\in G_2}\gamma'_{j_2}\beta'_{h_2}B'_{j_2}C'_{h_2}\right)\\
&=\sum_{j_2\in F_2, h_2\in G_2}\gamma'_{j_2}\beta'_{h_2}=\widetilde{\om}(B)\widetilde{\om}(C)
\end{align*}
in that $\widetilde{\om}=\langle \cdot\zeta , \zeta\rangle$ vanishes on all $\lambda$-forms and is $1$ on all
$\pi$-forms in the Hamel basis.\\

 We now move on to the implication $(ii)\Rightarrow (i)$.
Our strategy is to show that if a state $\varphi$ satisfies  the conditions in Definition \ref{iid}, then its restriction
to $\gam_0$ is invariant
for all maps $T_\s$, $\sigma\in\bp_\bz$. To this aim, in light of Remark \ref{THamel} it is enough to show that any such $\varphi$
vanishes on all $\lambda$-forms of the Hamel basis of length greater than $1$ and is constant on all $\pi$-forms of the same basis. For the computations we need to make it is useful to recall that for every $j\in\bz$ one has
$\iota_j(a)=\pi_\varphi(a_j)$, where $a\in\bm_2(\bc)$ is the matrix
$
 \left(
\begin{array}{ll}
0 & 1 \\
0 & 0
\end{array}
\right)
$.\\
The first property we verify is that $\varphi(a_ja^\dag_j)=\varphi(a_ha^\dag_h)$ for any $j, h\in\bz$.
This is seen as follows:
\begin{align*}
\varphi(a_ja^\dag_j)&= \langle E_\varphi (\pi_\varphi(a_j a^\dag_j))\xi_\f, \xi_\f \rangle=
\langle E_\varphi(\iota_j(aa^*))\xi_\f, \xi_\f \rangle\\
&= \langle E_\varphi(\iota_h(aa^*))\xi_\f, \xi_\f \rangle= \varphi(a_h a^\dag_h)
\end{align*}
where in second-last equality we exploited the assumption that the process is identically distributed.\\
Let now $X_{(\l_1,\l_2)}$ 
be a $\lambda$-form, where $\l_1:=\{i_1, i_2, \ldots, i_m\}$, $\l_2:=\{j_n, \ldots, j_2, j_1\}$, and finally $l(X_{(\l_1,\l_2)})> 1$. We want to make sure that $\varphi(X_{(\l_1,\l_2)})=0$. Fix $r\in\bz$ greater than
$\max \{i_m, j_1\}$. There are two cases to deal with depending on whether $\lambda_1$ is empty or not.\\
If $\lambda_1=\emptyset$, using \cite[Lemma 1]{Fbo} we find
\begin{align*}
\varphi(X_{(\emptyset,\l_2)})&=\varphi(a_{j_1}a_{j_2}\cdots a_{j_n})\\
&=\langle E_\f(\pi_\f(a_{j_1}a_{j_2}\cdots a_{j_n}))  \xi_\f, \xi_\f\rangle\\
&=\langle E_\f(\iota_{j_1}(a)\iota_{j_2}(a)\cdots\iota_{j_n}(a))  \xi_\f, \xi_\f\rangle\\
&=\langle E_\f(\iota_{j_1}(a)) E_\f(\iota_{j_2}(a))\cdots E_\f(\iota_{j_n}(a))  \xi_\f, \xi_\f\rangle\\
&=\langle E_\f(\iota_{j_1}(a)) E_\f(\iota_{j_2}(a))\cdots E_\f(\iota_{j_r}(a))  \xi_\f, \xi_\f\rangle\\
&=\langle E_\f(\pi_\f(a_{j_1}a_{j_2}\cdots a_{j_{n-1}}a_{j_r}))  \xi_\f, \xi_\f\rangle=0
\end{align*}
since $a_{j_{n-1}}a_{j_r}=0$ by \eqref{comrul}.\\
If $\lambda_1\neq\emptyset$, we pick $l\in\bz$ strictly less than $\min\{i_1, j_n\}$.
By Lemma 5.4 in \cite{CFL} we have
\begin{align*}
\varphi(X_{(\l_1,\l_2)})&=\varphi(a_la_l^\dag X_{(\l_1,\l_2)})\\
&=\big\langle E_\varphi\big(\iota_l(aa^*)\iota_{i_1}(a^*)\cdots \iota_{i_m}(a^*) \iota_{j_1}(a)\cdots \iota_{j_n}(a)\big) \xi_\f,\xi_\f \big\rangle\\
&=\big\langle E_\varphi\big(\iota_l(aa^*)\big) E_\varphi\big(\iota_{i_1}(a^*)\cdots \iota_{i_m}(a^*) \iota_{j_1}(a)\cdots \iota_{j_n}(a)\big) \xi_\f,\xi_\f \big\rangle\\
&=\big\langle E_\varphi\big(\iota_r(aa^*)\big) E_\varphi\big(\iota_{i_1}(a^*)\cdots \iota_{i_m}(a^*) \iota_{j_1}(a)\cdots \iota_{j_n}(a)\big) \xi_\f,\xi_\f \big\rangle\\
&=\big\langle \pi_\f(a_r a^\dag_r a^\dag_{i_1}\cdots a^\dag_{i_m}a_{j_1}\cdots a_{j_n})\xi_\f, \xi_\f \big\rangle=0
\end{align*}
since $a^\dag_r a^\dag_{i_1}=0$ by \eqref{comrul}.

\end{proof}

\section{The case of $q$-deformed processes}
\label{qdeformed}
The present section contains a result on the structure of the tail algebra
of the vacuum state on the $C^*$-algebra generated by operators satisfying the so-called $q$-deformed commutation
rules. Furthermore, it also includes some remarks about the
stationary and exchangeable algebras. 
We start by recalling how the $C^*$-algebra alluded to above can be obtained concretely.
This is by definition the $C^*$-algebra generated by creation and annihilation operators on the $q$-deformed Fock space \cite{BS}, whose
construction we next sketch for convenience.

To this aim, we take $-1<q<1$ and fix a Hilbert space $\ch$. The $q$--deformed Fock space $\G_q(\ch)$ is the completion of
the algebraic linear span of the vacuum vector $\z_q$, together with
vectors
$$
f_1\otimes\cdots\otimes f_n\,,\quad
f_j\in\ch\,,j=1,\dots,n\,,n=1,2,\dots
$$
with respect to the $q$--deformed inner product
\begin{equation*}
\langle f_1\otimes\cdots\otimes
f_m\,,g_1\otimes\cdots\otimes g_n\rangle_q
:=\d_{n,m}\sum_{\pi\in\bp_n}q^{i(\pi)}\langle
f_1\,,g_{\pi(1)}\rangle_\ch\cdots\langle f_n\,,g_{\pi(n)}\rangle_\ch\,,
\end{equation*}
where $i(\pi)$ is the
number of inversions of $\pi\in\bp_n$.
Fix $f,f_1,\ldots,f_n\in\ch$. Define the creator $l^\dagger(f)$ as
$$
l^\dagger(f)\z_q=f\,,\quad l^\dagger(f)f_1\otimes\cdots\otimes f_n=f\otimes
f_1\otimes\cdots\otimes f_n\,,
$$
and the annihilator $l(f)$ as
\begin{align}
\begin{split}
\label{qann}
&l(f)\z_q=0\,,\quad
l(f)(f_1\otimes\cdots\otimes f_n)\\
=&\sum_{k=1}^nq^{k-1}\langle
f_k,f\rangle_\ch f_1\otimes\cdots f_{k-1}\otimes
f_{k+1}\otimes\cdots\otimes f_n\,.
\end{split}
\end{align}
$l^\dagger(f)$ and $l(f)$ are mutually adjoint with respect to the $q$--deformed inner
product, and are continuous since for each $f\in\ch$
$$
\|l(f)\|=\|l^\dag(f)\|\leq \left\{\begin{array}{ll}
                                                                      \frac{1}{\sqrt{1-q}}\|f\| & \text{if $0\leq q<1$} \\
                                                                      \|f\| & \text{if $-1 <q\leq 0$}
                                                                    \end{array}
                                                                    \right.\,.
$$

 In addition, they are the Fock representation of the  $q$-commutation relations \cite{BS}, that is satisfy
\begin{equation}
\label{qcom}
l(f)l^\dagger(g)-ql^\dagger(g)l(f)=\langle
g,f\rangle_{\ch}I\,,\qquad f,g\in\ch\,.
\end{equation}
The limit cases are the Canonical Commutation Relations (Bosons) when $q=1$, and the
Canonical Anticommutation Relations (Fermions) for $q=-1$. The case $q=0$ corresponds to the free group reduced $C^*$--algebra \cite{VDN}.

Let us now take $\ch=\ell^2(\bz)$ with the canonical orthonormal basis $\{e_j\}_{j\in\bz}$, and on $\G_q(\ell^2(\bz))$ denote $l_j:=l(e_j)$, $j\in\bz$. The concrete $C^*$--algebra $\gar_q$ and its subalgebra $\gg_q$, acting on $\G_q(\ell^2(\bz))$ are the unital
$C^*$--algebras generated by the annihilators $\{l_j\mid j\in\bz\}$, and the self-adjoint part of
annihilators $s_j:=l_j+l_j^\dagger$, $j\in\bz$, respectively. Notice that $\gar_0$ is nothing but the Cuntz algebra $\co_\infty$ \cite[p. 6]{VDN}.\\
Denote by $\om_q:=\langle\cdot \z_q,\z_q\rangle$ the Fock vacuum vector state. For any word $X:=l^{\natural}_{i_1}l^{\natural}_{i_2}\cdots l^{\natural}_{i_n}$, where $\natural\in\{1,\dag\}$, $i_1,i_2,\ldots, i_n\in\bz$, $n\in\bn$, we say that $X$ is Wick-ordered if all the creators are on the left of the annihilators.
Note that thanks \eqref{qcom} to any word in the generators of $\mathfrak{R}_q$ can be rewritten as the sum of a multiple
of the identity with a finite linear combination of Wick-ordered words.\\

Our aim is to define a stochastic process on $\gar_q$. To do so, we first need to define our sample algebra $\ga$.
This will be the universal
$C^*$-algebra generated by an element $a$ such that $aa^*-qa^*a=\idd$.
Note that such algebra exists as the $C^*$-maximal (semi)-norm of $a$ is bounded from above by
$\frac{1}{\sqrt{1-|q|}}$ for any $-1<q<1$.
For any $j\in\bz$, by universality of $\ga$ there exists a unique $*$-homomorphism $\iota_j:\ga\rightarrow \mathcal{B}(\G_q(\ch))$ such that $\iota_j(a)=l_j$. In this way we obtain the quantum stochastic process $(\ga,  \G_q(\ell^2(\bz)), \{\iota_j\}_{j\in\bz}, \z_q)$.\\

The groups $\bz$ and $\bp_\bz$ both act on $\mathfrak{R}_q$ naturally. The ergodic properties of the corresponding
dynamical systems, namely shift invariance and exchangeability, have been addressed in \cite{DyF} and \cite{CrFid}, respectively.
The unital semigroup $\bl_\bz$, too,  acts on $\mathfrak{R}_q$ by $*$-endomorphisms $\L_g$, $g\in\bl_\bz$, given
by $\L_g(l_i):=l_{g(i)}$, $i\in\bz$. Thus, any  spreadable stochastic process corresponds to a state $\f$ such that $\f\circ\L_g=\f$, $g\in\bl_\bz$.
For completeness' sake, we also recall that from \cite[Theorem 3.3 and Corollary 3.4]{DyF} and \cite[Proposition 6.2]{CrFid}
the chain of equalities 
$$
\cs_{\bp_\bz}(\gar_q)=\cs_{\bl_\bz}(\gar_q)=\cs_{\bz}(\gar_q)=\{\om_q\}\,.
$$
and
$$
\cs_{\bp_\bz}(\gg_q)=\cs_{\bl_\bz}(\gg_q)=\cs_{\bz}(\gg_q)=\{\om_q\}.
$$
hold.

\begin{prop}\label{trivialq}
The tail algebra $\xi_{\om_q}^{\perp}$ is trivial.
\end{prop}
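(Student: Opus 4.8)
The plan is to unwind the definition of the tail algebra in the Fock representation and then reduce everything to a single combinatorial factorization of the vacuum expectation. Since $\iota_j(\ga)$ is the $C^*$-algebra generated by $l_j$ and $l_j^\dagger$, for every finite $I\subset\bz$ one has $\big(\bigcup_{K\cap I=\emptyset}\bigvee_{k\in K}\iota_k(\ga)\big)''=\{l_k,l_k^\dagger : k\notin I\}''$, and as the sets $\{-n,\dots,n\}$ are cofinal among finite subsets of $\bz$, the tail algebra reduces to
\[
\xi_{\om_q}^{\perp}=\bigcap_{n\in\bn}\mathcal{R}_n,\qquad \mathcal{R}_n:=\{l_j, l_j^\dagger : |j|>n\}''\,.
\]
I would then fix $T\in\xi_{\om_q}^\perp$ and try to show directly that $T=\om_q(T)I$, exploiting that $T$ lies in $\mathcal{R}_n$ for \emph{every} $n$, however large.

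The heart of the argument is an exact factorization of $\om_q$. Recall that $\om_q$ evaluated on a Wick-reduced word in the $l_j,l_j^\dagger$ is given by the $q$-Wick formula \cite{BS}, namely a sum over pair partitions matching each annihilator with a creator of the same index, each partition weighted by $q$ to the power of the number of crossings of the associated arcs. I would establish the following: if $A,B$ are noncommutative polynomials in $\{l_j,l_j^\dagger : |j|\le m\}$ and $W$ is a polynomial in $\{l_j,l_j^\dagger : |j|>n\}$ with $n>m$, then $\om_q(B^* W A)=\om_q(B^* A)\,\om_q(W)$. The point is purely combinatorial: in the word $B^* W A$ the letters of $W$ occupy the central block and carry indices disjoint from those of $A,B$, so every contraction pair is either internal to $W$ or joins two outer letters of $B^* A$. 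An outer pair that straddles the central block has \emph{all} of $W$'s letters between its endpoints, hence merely \emph{nests} the inner pairs rather than crossing them, while an outer pair lying entirely in $B^*$ or entirely in $A$ sits side by side with the central block; in either case no crossing is ever produced between an inner and an outer pair. Thus the total crossing number splits additively, the Wick sum factorizes, the outer factor is exactly $\om_q(B^* A)$ (deleting the central block preserves the relative order, and so the crossings, of the outer letters), and the inner factor is $\om_q(W)$.

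With this in hand the conclusion follows quickly. Given $T\in\xi_{\om_q}^\perp$ set $T_0:=T-\om_q(T)I\in\xi_{\om_q}^\perp$, so $\om_q(T_0)=0$. Fix local polynomials $A,B$ with indices in $\{|j|\le m\}$ and choose $n>m$. Since $T_0\in\mathcal{R}_n$, Kaplansky's density theorem supplies polynomials $W_k$ in $\{l_j,l_j^\dagger : |j|>n\}$ with $\|W_k\|\le\|T_0\|$ and $W_k\to T_0$ strongly; then $W_kA\z_q\to T_0A\z_q$ and $\om_q(W_k)=\langle W_k\z_q,\z_q\rangle\to\om_q(T_0)=0$. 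Passing to the limit in the factorization identity yields
\[
\langle T_0 A\z_q, B\z_q\rangle=\om_q(B^* T_0 A)=\om_q(B^* A)\,\om_q(T_0)=0\,.
\]
As the vectors $A\z_q$, with $A$ ranging over local polynomials, are dense in $\G_q(\ell^2(\bz))$ (the creators build every basis tensor from $\z_q$), I conclude $T_0=0$, that is $T=\om_q(T)I$, whence $\xi_{\om_q}^\perp=\bc I$.

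I expect the main obstacle to be the combinatorial factorization lemma, specifically the verification that inner and outer contraction arcs never cross but only nest or lie side by side; this is precisely where the separation of supports $n>m$ is genuinely used, and it must be checked carefully against the exact crossing-counting convention of the $q$-Wick formula (and against the annihilator/creator ordering constraints within each admissible pairing). A secondary point needing care is the strong-approximation step: one must ensure simultaneously that $W_kA\z_q\to T_0A\z_q$ and that $\om_q(W_k)\to\om_q(T_0)$, which is why the norm-bounded form given by Kaplansky's theorem, rather than a bare strong-density statement, is invoked.
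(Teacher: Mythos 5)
Your argument is correct, but it reaches the conclusion by a different mechanism than the paper. The paper also reduces to showing $\bigcap_n\mathcal{R}_n=\bc I$, but it then approximates a tail element $T$ in the \emph{weak} operator topology by elements $A_n=\lambda_n I+Q_n$ of the far-away local $*$-algebras, and its key observation is only a \emph{vanishing} statement: a Wick-ordered word whose indices all exceed (in modulus) the indices appearing in two fixed finite-particle basis vectors has zero matrix element between them (the rightmost annihilator kills the vector, or, if the word consists only of creators, one creator can be moved to the other side where it acts as an annihilator). A boundedness and subsequence argument on the scalars $\lambda_n$ then yields $T=\lambda I$. You instead prove the stronger, structural statement that the vacuum expectation factorizes over disjoint index supports, $\om_q(B^*WA)=\om_q(B^*A)\,\om_q(W)$, via the pair-partition ($q$-Wick) formula and the observation that inner and outer arcs can only nest or sit side by side, never cross; combined with Kaplansky density this gives $\langle T_0A\z_q,B\z_q\rangle=\om_q(B^*A)\,\om_q(T_0)=0$ directly, with no subsequence extraction. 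Your route costs the verification of the combinatorial factorization lemma (which you correctly flag, and which does hold with the standard crossing convention and the annihilator-left/creator-right compatibility constraint, since cross-block pairs are killed by the Kronecker deltas), but it buys a cleaner limiting argument and a genuinely stronger intermediate result --- a clustering-at-infinity property of $\om_q$ --- of which triviality of the tail algebra is an immediate corollary. Both proofs are complete; only minor bookkeeping (extending the factorization by linearity to polynomials, including the identity as the empty word so that Kaplansky approximants with a scalar part are covered) should be written out if you formalize yours.
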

\begin{proof}
Let us define the decreasing sequence of von Neumann algebras $\mathcal{R}_n:=W^*(\{l_j: |j|\geq n\})\subset\cb( \G_q(\ell^2(\bz)))$. The thesis amounts to showing that $\bigcap_{n=1}^\infty \mathcal{R}_n=\bc I$.
To this end, let us also define the decreasing sequence of $*$-algebras $\mathcal{A}_n\subset\mathcal{R}_n$, where
$\mathcal{A}_n$ is the involutive subalgebra of $\cb( \G_q(\ell^2(\bz)))$ generated by the infinite set
$\{l_j: |j|\geq n\}$. Note that by definition  $\mathcal{A}_n''=\mathcal{R}_n$ for every $n\in\bn$.
We consider the set  in $\G_q(\ell^2(\bz))$ made up by the vacuum vector $\zeta_q$ and the countable collection of vectors  of the form
$e_{j_1}\otimes\cdots\otimes e_{j_k}$, with $k\in\bn$ and $j_1, j_2,\ldots, j_k\in\bz$.
Henceforth we will denote this set by $\{\eta_i: i\in\bn\}$.
Let now $T$ be an operator sitting in the intersection $\bigcap_{n=1}^\infty \mathcal{R}_n$.
There is no loss of generality if we also suppose $\|T\|\leq 1$.
Since $\mathcal{A}_n$ is weakly dense in $\mathcal{R}_n$, for every $n$ there exists $A_n\in\mathcal{A}_n$ with
$\|A_n\|\leq 1$ and
\begin{equation}\label{inequality}
|\langle (T-A_n)\eta_i,\eta_j \rangle_q|<\frac{1}{n}\, ,\quad \textrm{for}\,\, i,j=1, 2, \ldots n
\end{equation}
We next rewrite each $A_n$ as $A_n=\lambda_n I+Q_n$, where $\lambda_n\in\bc$ and $Q_n$
is a finite linear combination of Wick-ordered words in $l_h, l^\dag_k$ with
$|h|, |k|\geq n$.\\
We claim that for any fixed $i, j\in\bn$, the inner product $\langle Q_n\eta_i, \eta_j \rangle_q$  eventually vanishes 
(w.r.t. $n$).
Thanks to the equality $\langle A_n\zeta_q, \zeta_q\rangle_q=\lambda_n+\langle Q_n\zeta_q, \zeta_q\rangle_q=\lambda_n$ for
$n$ large enough, we see that the sequence $\{\lambda_n\}_{n\in\bn}$ is bounded with
$|\lambda_n|\leq 1$ for $n$ big enough. But then the sequence of operators $Q_n$ is uniformly bounded as well, and in particular it converges to $0$ in the weak operator topology. 
Up to extracting a subsequence, we can assume that $\{\lambda_n\}_{n\in\bn}$ converges to some
$\lambda\in\bc$. Taking  the limit of \eqref{inequality} for $n\rightarrow\infty$, we finally obtain
$$\langle T\eta_i, \eta_j\rangle_q=\lambda\langle \eta_i,  \eta_j \rangle_q\, ,\quad\textrm{for any}\, i, j\in\bn$$
hence $T=\lambda I$ as the set $\{\eta_i: i\in\bn\}$ is total in the Hilbert space $\G_q(\ell^2(\bz)))$.\\
All is left to do is prove the claim. Fix $i, j\in\bn$, and let us first assume that
$\eta_i= e_{r_1}\otimes\cdots\otimes e_{r_k}$ and $\eta_j=e_{h_1}\otimes\cdots\otimes e_{h_m}$. If we set $N_0:=\max\{|r_1|, ..., |r_k|, |h_1|, ..., |h_m| \}$, for
$n>N_0$ we have $\langle Q_n\eta_i,\eta_j \rangle_q=0$. Indeed, it is enough to observe that
for any Wick-ordered word $W=l^\natural_{p_1}\cdots l^\natural_{p_s}$
with $|p_l|>N_0$ for $l=1, 2, \ldots, s$, where $\natural\in\{1, \dag\}$, the equality
$\langle W(e_{r_1}\otimes\cdots\otimes e_{r_k}),e_{h_1}\otimes\cdots\otimes e_{h_m} \rangle_q=0$
is satisfied. Now, if at least one annihilator shows up in $W$, then $l^\natural_{p_s}=l_{p_s}$ because
$W$ is Wick ordered, which means $l_{p_s}(e_{r_1}\otimes\cdots\otimes e_{r_k})=0$ by \eqref{qann}. Thus
one has
$$\langle W(e_{r_1}\otimes\cdots\otimes e_{r_k}), e_{h_1}\otimes\cdots\otimes e_{h_m} \rangle_q=0. $$
If no annihilator shows up, then $W=l^\dag_{p_1}\cdots l^\dag_{p_s}$, and accordingly 
\begin{align*}
&\langle W(e_{r_1}\otimes\cdots\otimes e_{r_k}), e_{h_1}\otimes\cdots\otimes e_{h_m} \rangle_q\\
&=\langle l^\dag_{p_1}\cdots l^\dag_{p_s}(e_{r_1}\otimes\cdots\otimes e_{r_k}),e_{h_1}\otimes\cdots\otimes e_{h_m} \rangle_q\\
&=\langle l^\dag_{p_2}\cdots l^\dag_{p_s}(e_{r_1}\otimes\cdots\otimes e_{r_k}),l_{p_1}(e_{h_1}\otimes\cdots\otimes e_{h_m})\rangle_q=0.
\end{align*}
Finally, if one of $\eta_i$ and $\eta_j$ is the vacuum vector $\zeta_q$ the above argument continues to work due to
\eqref{qann} once again. 
\end{proof}

\begin{rem}\label{remq}
It is worth pointing out that an analogue of the Hewitt-Savage theorem
does not hold for our $q$-deformed processes, for 
the tail algebra $\xi_{\om_q}^\perp$ is strictly contained in $\pi_{\om_q}(\gar_q)''_{\bp_\bz}$.
To see this, we start by recalling that $\pi_{\om_q}(\gar_q)''=\mathcal{B}(\G_q(\ell^2(\bz)))$, see
\cite[Proposition 3.6 ]{DyF}. But then $\pi_{\om_q}(\gar_q)''_{\bp_\bz}$ is given
by all bounded operators on  $\G_q(\ell^2(\bz))$ that commute
with $U_\sigma$, $\sigma\in\bp_\bz$, where $U_\sigma$ is the unitary operator given by
$U_\sigma\zeta_q=\zeta_q$ and $U_\sigma(e_{i_1}\otimes\cdots \otimes e_{i_k})=e_{\sigma(i_1)}\otimes\cdots\otimes e_{\sigma(i_k)}$, for any $k\in\bn$ and $i_1, \ldots, i_k\in\bz$.
In other words, $\pi_{\om_q}(\gar_q)''_{\bp_\bz}$ is equal to $\{U_\sigma: \sigma\in\bp_\bz\}'$, which is easily seen to contain all operators of the form $\sum_{n=0}^\infty\lambda_n P_n$, where
$P_n$ is the orthogonal projection onto $\ch_n$, the $n$-particle space, with the convention
$\ch_0:= \bc\zeta_q$.\\
We would like to point out that the stationary algebra $\pi_{\om_q}(\gar_q)''_\bz$ coincides
with $\{U_\tau\}'$, where $U_\tau$ is the unitary operator acting on $(\G_q(\ell^2(\bz))$
as $U_\tau\zeta_q=\zeta_q$ and $U_\tau(e_{i_1}\otimes\cdots\otimes e_{i_k})=e_{i_1+1}\otimes\cdots\otimes e_{i_k+1}$, for any $k\in\bn$ and $i_1, \ldots, i_k\in\bz$. In particular, $U_\tau$ belongs to the stationary algebra.
This shows that the stationary algebra and the exchangeable algebra are not the same, for $U_\tau$ does not
commute with all $U_\sigma$'s, as is easily realized by taking $\sigma$ as the transposition $(1, 2)$.
In other words, the theorem by Olshen in \cite{Ols} ceases to hold for $q$-deformed processes.

\end{rem}
However, at the $C^*$-algebra level the exchangeable algebra is trivial.
More precisely, we have the following.

\begin{prop}\label{exq}
The exchangeable algebra 
$$\gar_q^{\bp_\bz}:=\{X\in\gar_q: U_\sigma X U_\sigma^*=X,\,\textrm{for all}\,\, \sigma\in\bp_\bz\}$$ reduces to $\bc I$.
\end{prop}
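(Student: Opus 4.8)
The plan is to realize the exchangeable algebra as the fixed-point algebra of a genuine group of automorphisms. Indeed, conjugation by $U_\sigma$ defines $\a_\sigma:=\mathrm{Ad}(U_\sigma)$, which acts on the generators by $\a_\sigma(l_j)=U_\sigma l(e_j)U_\sigma^*=l(e_{\sigma(j)})=l_{\sigma(j)}$; since these generate $\gar_q$, each $\a_\sigma$ is a $*$-automorphism of $\gar_q$ and $\gar_q^{\bp_\bz}=\{X\in\gar_q:\a_\sigma(X)=X\text{ for all }\sigma\}$. Given such an $X$, I would set $\lambda:=\om_q(X)$ and $Y:=X-\lambda I$. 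Because $\a_\sigma(I)=I$, the element $Y$ is again fixed by every $\a_\sigma$ and satisfies $\om_q(Y)=0$. As the vacuum $\z_q$ together with the finite tensors $e_{j_1}\otimes\cdots\otimes e_{j_k}$, i.e. the total set $\{\eta_i:i\in\bn\}$ of the proof of Proposition \ref{trivialq}, is total in $\G_q(\ell^2(\bz))$, it suffices to show $\langle Y\eta_i,\eta_j\rangle_q=0$ for all $i,j$, which forces $Y=0$ and hence $X=\lambda I\in\bc I$.

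Next I would approximate. The $*$-algebra generated by the $l_j$ is dense in $\gar_q$, and by \eqref{qcom} every word in the generators rewrites as a scalar multiple of $I$ plus a finite linear combination of Wick-ordered words. Since $\om_q(Y)=0$ while $\om_q$ annihilates every Wick-ordered word of length $\geq 1$, for each $\eps>0$ one can choose such a combination $B$, with indices drawn from a finite set $F\subset\bz$ and all summands of length $\geq 1$, satisfying $\|Y-B\|<\eps$. Because each $\a_\sigma$ is isometric and fixes $Y$, writing $B-\a_\sigma(B)=(B-Y)+\a_\sigma(Y-B)$ yields $\|B-\a_\sigma(B)\|<2\eps$ for every $\sigma\in\bp_\bz$.

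The crux is then to annihilate $\a_\sigma(B)$ against the chosen test vectors. Fixing $\eta_i,\eta_j$ and letting $N_0$ be the largest modulus of an index occurring in them, I would pick a finite permutation $\sigma$ with $|\sigma(f)|>N_0$ for every $f\in F$ (possible since $F$ is finite and $\{n:|n|>N_0\}$ is cofinite). Then $\a_\sigma(B)$ is a linear combination of Wick-ordered words all of whose indices exceed $N_0$ in modulus, so the vanishing computation already carried out in the proof of Proposition \ref{trivialq} gives $\langle\a_\sigma(B)\eta_i,\eta_j\rangle_q=0$. Consequently $|\langle B\eta_i,\eta_j\rangle_q|=|\langle(B-\a_\sigma(B))\eta_i,\eta_j\rangle_q|<2\eps\,\|\eta_i\|\,\|\eta_j\|$, and combining this with $\|Y-B\|<\eps$ gives $|\langle Y\eta_i,\eta_j\rangle_q|<3\eps\,\|\eta_i\|\,\|\eta_j\|$. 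Letting $\eps\to 0$ forces $\langle Y\eta_i,\eta_j\rangle_q=0$, and totality of $\{\eta_i\}$ finishes the proof.

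The main obstacle to anticipate is that testing against the vacuum alone controls only the purely creation part $B\z_q$; to pin down the whole operator one must test against every $\eta_j$, and this is precisely where the Wick-ordering-plus-remote-index mechanism of Proposition \ref{trivialq} is indispensable. A second, more cosmetic point is that the scalar $\lambda=\om_q(X)$ must be fixed \emph{before} approximating: the scalar produced by a generic $\eps$-approximation would depend on $\eps$, whereas passing to the limit requires a single fixed candidate for the value of $X$, supplied by the exchangeability of $\om_q$ through $\om_q(X)$.
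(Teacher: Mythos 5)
Your proof is correct, but it follows a genuinely different route from the paper's. The paper argues by contradiction in three lines: if $\gar_q^{\bp_\bz}$ were nontrivial it would carry two distinct states; since $\bp_\bz$ is amenable (a direct limit of finite groups), each extends to a $\bp_\bz$-invariant state on $\gar_q$ by Hahn--Banach followed by averaging against an invariant mean, and the two extensions remain distinct because they already differ on the fixed-point algebra; this contradicts the unique ergodicity statement $\cs_{\bp_\bz}(\gar_q)=\{\om_q\}$ quoted from the literature. Your argument instead is direct and quantitative: subtract $\om_q(X)I$, approximate the remainder in norm by a Wick polynomial $B$ with no constant term (the estimate $|\mu|=|\om_q(A)-\om_q(Y)|\le\|A-Y\|$ justifying that the constant term can be discarded is implicit but sound), exploit invariance and isometry of $\a_\sigma$ to get $\|B-\a_\sigma(B)\|<2\eps$, and then choose $\sigma$ pushing all indices of $B$ beyond the support of the two test vectors so that the vanishing computation of Proposition \ref{trivialq} applies to $\a_\sigma(B)$. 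Each approach buys something: the paper's is shorter and shows the phenomenon is generic for uniquely ergodic actions of amenable groups, but it leans on the nontrivial external input that $\om_q$ is the unique symmetric state; yours is self-contained, reuses only the Wick-ordering density and the remote-index annihilation already established for the tail algebra, and makes visible exactly why the conclusion fails at the von Neumann level (Remark \ref{remq}): it is the \emph{norm} approximation by Wick polynomials with finitely supported indices that does the work, and this is unavailable for the weak closure.
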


\begin{proof}
We shall argue by contradiction.
Suppose that $\gar_q^{\bp_\bz}$ is not trivial. Then there exist at least 
two different states $\om_1$ and $\om_2$ on it. Since
the group $\bp_\bz$ is amenable (being a direct limit of finite groups), there exist 
$\bp_\bz$-invariant states $\widetilde{\om_1}$ and $\widetilde{\om_2}$ on $\gar_q$ which restricts
to $\gar_q^{\bp_\bz}$ as $\om_1$ and $\om_2$, respectively. This is absurd because
 $\widetilde{\om_1}\neq\widetilde{\om_2}$, but $\om_q$ is the only $\bp_\bz$-invariant state on $\gar_q$.
\end{proof}
Since $\bz$ is amenable, the same argument as in the proof of Proposition \ref{exq} shows that 
$\gar_q^\bz:=\{X\in\mathfrak{R}_q: U_\tau X U_\tau^*=X\}$ is trivial as well.
Finally, $\gar_q^{\bl_\bz}:=\{X\in\mathfrak{R}_q: \Lambda_g(X)=X\,\, \textrm{for all}\,\, g\in \bl_\bz\}$ is also trivial because $\gar_q^{\bl_\bz}\subseteq \gar_q^\bz$.

\appendix
\section{}

We provide here a result that allows for a full reconstruction of classical (real-valued)
stochastic processes from the general setting of $C^*$-algebras and their representation theory.
To this end, in the definition of a quantum stochastic process, the sample algebra
$\ga$ must be chosen to be $C_0(\br)$, the $C^*$-algebra of all continuous functions on the real line 
vanishing at infinity. Furthermore, the algebra $\bigvee_{j\in\bz}\iota_j (C_0(\br))$
needs to be assumed commutative as well. The result is actually more or less known. Nevertheless, we
state it in a possibly novel form, which is more suited to the language of Quantum Probability.
We keep the notation we established in Section \ref{prel}.

\begin{prop}
Let $(C_0(\br), \ch, \{\iota_j\}_{j\in\bz}, \xi)$ be
a stochastic process such that the von Neumann algebra $\bigvee_{j\in\bz}\iota_j (C_0(\br))$ is commutative.
Then there exists a commuting family $\{A_j: j\in\bz\}$ of
(possibly unbounded) self-adjoint operators on $\ch$ such that for every $j\in\bz$ one has
$\iota_j(f)=f(A_j)$, $f\in C_0(\br)$.\\
In addition there exist a probability measure $\mu$ on $(\br^\bz, \mathfrak{C})$ and a unitary $U: \ch\rightarrow L^2(\br^\bz, \mu)$ such that:
\begin{enumerate}
\item $U\xi=[1]_\mu$
\item$UA_jU^*=M_j$, for every $j\in\bz$, where $M_j$ is the operator acting on $L^2(\br^\bz, \mu)$
as the multiplication by $X_j$.
\end{enumerate}
\end{prop}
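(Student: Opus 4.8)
The plan is to reconstruct the classical picture in three movements: first produce the commuting self-adjoint operators $A_j$, then manufacture the measure $\mu$ on $\br^\bz$ out of their joint spectral data, and finally build the intertwining unitary $U$. Throughout I set $\cam:=\bigvee_{j\in\bz}\iota_j(C_0(\br))$, the commutative von Neumann algebra in the hypothesis.

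First I would recover each $A_j$ from $\iota_j$. Since $C_0(\br)$ is abelian and (as is customary for the non-unital sample algebra $C_0(\br)$) the representations $\iota_j$ are non-degenerate, the functional calculus for representations of $C_0(\br)$ attaches to $\iota_j$ a unique projection-valued measure $E_j$ on the Borel sets of $\br$ with $\iota_j(f)=\int_\br f\,\di E_j$ for every $f\in C_0(\br)$; setting $A_j:=\int_\br \lambda\,\di E_j(\lambda)$ yields a self-adjoint operator with $\iota_j(f)=f(A_j)$. The crucial use of the hypothesis enters here: every spectral projection $E_j(B)$ belongs to $\iota_j(C_0(\br))''\subseteq\cam$, and $\cam$ is commutative by assumption. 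Consequently all the $E_j(B)$ mutually commute, so the $A_j$ pairwise strongly commute in the sense that their spectral measures commute.

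Next I would turn these commuting spectral measures into a measure on $\br^\bz$. For each finite tuple $j_1<\cdots<j_k$ the commuting projections give a joint spectral measure $E_{j_1,\dots,j_k}$ on $\br^k$, and I would set $\mu_{j_1,\dots,j_k}(A_1\times\cdots\times A_k):=\langle E_{j_1}(A_1)\cdots E_{j_k}(A_k)\xi,\xi\rangle$. These finite-dimensional distributions are manifestly consistent, so Kolmogorov's extension theorem delivers a probability measure $\mu$ on $(\br^\bz,\mathfrak{C})$; equivalently, the projective limit of the $E_{j_1,\dots,j_k}$ is a single joint projection-valued measure $E$ on $(\br^\bz,\mathfrak{C})$ with $\mu(S)=\langle E(S)\xi,\xi\rangle$ and $A_j=\int_{\br^\bz}x_j\,\di E(x)$. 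I expect this passage from finitely to infinitely many commuting operators — producing a genuinely $\sigma$-additive joint spectral measure on the infinite product $\br^\bz$ rather than a merely finitely additive object — to be the main obstacle, and it is exactly here that the projective-limit/Kolmogorov machinery must be invoked with care.

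Finally I would construct $U$. Since $\cam=W^*(\{A_j:j\in\bz\})=\{\int_{\br^\bz}\phi\,\di E:\phi\in L^\infty(\br^\bz,\mu)\}$, every vector of the dense subspace $\cam\xi$ has the form $\int_{\br^\bz}\phi\,\di E\,\xi$, and I would declare $U\big(\int_{\br^\bz}\phi\,\di E\,\xi\big):=[\phi]_\mu$. Because $\xi$ is cyclic for the abelian algebra $\cam$ it is also separating, whence $E(S)=0\iff\mu(S)=0$; together with the identity $\big\|\int_{\br^\bz}\phi\,\di E\,\xi\big\|^2=\int_{\br^\bz}|\phi|^2\,\di\mu$ this shows at once that $U$ is well defined and isometric, and its range is dense since bounded $\mathfrak{C}$-measurable functions are dense in $L^2(\br^\bz,\mu)$. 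Thus $U$ extends to a unitary $\ch\to L^2(\br^\bz,\mu)$. Taking $\phi\equiv1$ gives $U\xi=[1]_\mu$, which is $(1)$. For $(2)$, a direct computation gives $U\iota_j(f)\big(\int_{\br^\bz}\phi\,\di E\,\xi\big)=[f(X_j)\phi]_\mu=f(M_j)[\phi]_\mu$, so that $U\iota_j(f)U^*=f(M_j)$ for every $f\in C_0(\br)$; as the functions in $C_0(\br)$ determine a self-adjoint operator uniquely through the functional calculus, this forces $UA_jU^*=M_j$ and completes the argument.
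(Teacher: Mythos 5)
Your argument is correct, and its first half (recovering commuting spectral measures $E_j$ from the non-degenerate representations $\iota_j$, observing that all spectral projections land in the commutative algebra $\bigvee_{j}\iota_j(C_0(\br))$, and feeding the resulting finite-dimensional distributions into Kolmogorov's theorem) coincides with what the paper does. Where you genuinely diverge is in the construction of $U$ and the verification of (1) and (2). The paper never forms a joint projection-valued measure on the infinite product: it defines $U_0$ only on the vectors $f(A_{j_1},\ldots,A_{j_n})\xi$ with $f\in C_0(\br^n)$, which makes the isometry property immediate from the Riesz--Markov identity but forces two extra pieces of work, namely an approximation by cylinder sets and compactly supported functions to obtain $U\xi=[1]_\mu$ (since $1\notin C_0(\br^n)$), and a core argument via Stone's theorem (Reed--Simon, Theorem VIII.11) together with closedness of $M_j$ to upgrade $UA_j=M_jU$ from the dense subspace $\mathcal{D}_j$ to the equality $UA_jU^*=M_j$. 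You instead build the joint spectral measure $E$ on $(\br^\bz,\mathfrak{C})$, define $U$ on all of $\cam\xi$ via the $L^\infty$ functional calculus, and then (1) is the case $\phi\equiv 1$ while (2) follows because a self-adjoint operator is determined by its $C_0(\br)$-functional calculus (take $f(\lambda)=(\lambda-i)^{-1}$ and compare resolvents), so no domain analysis for unbounded operators is needed. The price is the step you flag yourself: the passage from the finite joint spectral measures to a $\sigma$-additive projection-valued measure on the product $\sigma$-algebra, together with the identification $\cam=\{\int_{\br^\bz}\phi\,\di E:\phi\in L^\infty(\br^\bz,\mu)\}$. Both facts are true and standard (a monotone-class argument starting from the cylinder algebra, using that $\xi$ is cyclic and hence separating for the abelian $\cam$), but they constitute exactly the technical content that the paper's more pedestrian route avoids; a complete write-up should either prove that extension or fall back on the paper's dense-subspace definition of $U_0$.
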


\begin{proof}
Let $\cb_b(\br)$  be the $C^*$-algebra of all bounded Borel functions on $\br$.
As is known, each $\iota_j$ can be extended to a $*$-representation of $\cb_b(\br)$, which we
denote by $\widetilde{\iota_j}$. 
By a standard approximation argument, one easily sees that the ranges of the
extended representations continue to commute with each other.\\
For any fixed $j$, set $E_j(\Delta):=\widetilde{\iota_j}(\chi_\Delta) $, where
$\Delta$ is a Borel subset of the real line. For every $j\in\bz$, the family of
orthogonal projections $\{E_j(\Delta): \Delta\subset\br\, \textrm{ is a Borel subset}\}$
is a resolution of the identity, and as such it defines a self-adjoint operator 
$A_j:=\int_{\br}\lambda{\rm d}E_j(\lambda)$.
Note that the operators $A_j$, $j\in\bz$, commute with one another because their spectral
projections do.\\
Fix $n\in\bn$ and $j_1, j_2, \ldots, j_n\in\bz$. Consider the  bounded linear functional
$\varphi: C_0(\br^n)\rightarrow\bc$ given by
$\varphi(f)=\langle f(A_{j_1}, A_{j_2}, \ldots, A_{j_n})\xi, \xi \rangle$
for any $f\in C_0(\br^n)$. By the Riesz-Markov theorem there exists a Borel probability measure 
$\mu_{j_1, j_2, \ldots, j_n}$ on $\br^n$ such that 
\begin{eqnarray}\label{findim}
\langle f(A_{j_1}, A_{j_2}, \ldots, A_{j_n})\xi, \xi \rangle= \int_{\br^n} f{\rm d}\mu_{j_1, j_2, \ldots, j_n}, \, f\in C_0(\br^n).
\end{eqnarray}
Observe that the family $\{\mu_{j_1, j_2, \ldots, j_n}: n\in\bn, j_1, j_2, \ldots, j_n\in\bz\}$
satisfies the consistency conditions of Kolmogorov's theorem. Therefore, there exists
a probability measure $\mu$ on $(\br^\bz, \mathfrak{C})$ having the family above as its finite-dimensional
distributions.\\
Our next aim is to define a unitary $U$ from $\ch$ to $L^2(\br^\bz, \mu)$. To this end, first note that
the linear subspace 
$$\mathcal{D}:=\{f(A_{j_1}, A_{j_2}, \ldots, A_{j_n})\xi: n\in\bn, f\in C_0(\br^n), j_1, j_2, \ldots, j_n\in\bz\}$$
is dense in $\ch$ because $\xi$ is a cyclic vector for $\bigvee_{j\in\bz}\iota_j(C_0(\br))$.
We define $U_0: \mathcal{D}\rightarrow L^2(\br^\bz, \mu)$ by
\begin{eqnarray}\label{U0}
U_0f(A_{j_1}, A_{j_2}, \ldots, A_{j_n})\xi:= [f]_\mu\,,
\end{eqnarray}
where $[f]_\mu$ is the $\mu$-equivalence class of the function 
$f(X_{j_1}, X_{j_2}, \ldots, X_{j_n})$, and $X_j: \br^\bz\rightarrow\br$
is for every $j$ the $j$-th coordinate function, that is $X_j(x)=x_j$, $x=(x_j)\in\br^\bz$.
Thanks to \eqref{findim} one sees at once that $U_0$ is an isometry. By density of
$\mathcal{D}$ in $\ch$, $U_0$ can be extended to an isometry $U$ defined on the whole $\ch$.
By standard approximation arguments the range of $U$ is seen to be dense
in  $L^2(\br^\bz, \mu)$, hence $U$ is a unitary. 
In order to prove the equality $U\xi = [1]_\mu$,
we consider a bijection $g:\bn\rightarrow\bz$ and a sequence of
cylinders sets $C_n=\{x\in\br^\bz: x_{g(1)}\in B_1, \ldots, x_{g(n)}\in B_n \}$ such that
$\mu(C_n)\geq 1-\frac{1}{n}$ with $B_i\subset\br$ being suitable bounded Borel subsets.
Let now be $\{h_n\}_{n\in\bn}$ be a sequence of functions such that
 $h_n\in C_c(\br^n)$ with
$0\leq h_n\leq 1$ and $h_n(x_1, \ldots, x_n)=1$ for all
$(x_1, \ldots, x_n)\in B_1\times\cdots\times B_n$, for every $n$.
By evaluating \eqref{U0} on $h_n(A_{g(1)}, \ldots, A_{g(n)})\xi$, we get 
$$
U_0h_n(A_{g(1)}, \ldots, A_{g(n)})\xi= [h_n]_\mu\,.
$$
By construction the sequence $\{k_n\}_{n\in\bn}\subset L^2(\br^\bz, \mu)$ given by
$k_n:= h_n(X_{g(1)}, X_{g(2)}, \ldots, X_{g(n)})$ converges to
$[1]_\mu$ in the $\|\cdot\|_{L^2}$-norm. Therefore, one can take the limit as $n\rightarrow\infty$
of the above equality and gets $U\xi=[1]_\mu$.\\
All is left to do is prove the equality
$UA_jU^*=M_j$, for every $j\in\bz$. Fix $j\in\bz$ and define 
$$\mathcal{D}_j:=\{f(A_j, A_{j_1}, \ldots, A_{j_n})\xi: n\geq 0, j_1, \ldots. j_n\in\bz, f\in C_c(\br^{n+1})\}.$$
We aim to show that $\mathcal{D}_j$ is a core per $A_j$. We first note that
$\mathcal{D}_j$ is contained in the domain of $A_j$. Indeed, if $x=f(A_j, A_{j_1}, \ldots, A_{j_n})\xi$,
where $f$ is in $C_c(\br^{n+1})$ one has
$\int_\br \lambda_j^2 {\rm d}(E(\lambda_j)x, x)<\infty$ because the measure ${\rm d}(E(\lambda_j)x, x)$ is
by construction compactly supported. Second, the linear subspace
$\mathcal{D}_j$ is easily seen to be dense in $\ch$. Finally, $\mathcal{D}_j$
is invariant for the one-parameter group $U_j(t):=e^{itA_j}$ since for any $f\in C_0(\br^{n+1})$
the function $\br^{n+1}\ni(\lambda_j, \lambda_{j_1}, \ldots, \lambda_{j_n})\rightarrow e^{it\lambda_j}f(\lambda_j, \lambda_{j_1}, \ldots, \lambda_{j_n})\in\bc$ still has compact support. From Theorem VIII.11 in \cite{RS} it follows
that $\mathcal{D}_j$ is a core for $A_j$.\\
The equality $UA_j=M_jU$ is trivially satisfied on $\mathcal{D}_j$.
Let now $\varphi$ be in $\mathcal{D}(A_j)$. Then there exists
a sequence $\{\varphi_n\}_{n\in\bn}$ in $\mathcal{D}_j$ such that
$\varphi_n\rightarrow\varphi$ and $A_j\varphi_n\rightarrow A_j\varphi$. From the equality
$UA_j\varphi_n=M_jU\varphi_n$, $n\in\bn$, we see that the sequence
$M_jU\varphi_n$ converges to $UA_j\varphi$. Because $M_j$ is closed, we must have that
$U\varphi$ is in $\mathcal{D}(M_j)$ and  $UA_j\varphi=M_jU\varphi$.
In other words, we have $A_j\subset U^*M_jU$, and so
$A_j=U^*M_jU$ as $A_j$ and $U^*X_jU$
are both self-adjoint.
\end{proof}

\begin{rem}
Under the general hypotheses we are working with, a common core for all
operators $A_j$ may fail to exist, for  the intersection $\bigcap_{j\in\bz} \mathcal{D}_j$ is not even necessarily dense
in $\ch$.
\end{rem}

\section*{Acknowledgments}\noindent

We acknowledge the support of Italian INDAM-GNAMPA.

\end{document}